\newtheorem{theorem}{Theorem}[section]
\newtheorem{definition}{Definiton}[section]
\newtheorem{proposition}{Proposition}[section]
\newtheorem{corollary}{Corollary}[section]
\theoremstyle{remark}
\newtheorem{remark}{Remark}[section]
\begin{document}
\title{The first nonzero eigenvalue of the weighted $p$-Laplacian on differential forms}

\author{Mingzhu Miao, Xuerong Qi$^{*}$ and Jiabin Yin}

\address{School of Mathematics and Statistics, Zhengzhou University, Henan 450001, China}
\email{xrqi@zzu.edu.cn}
\address{School of Mathematics and Statistics, Xinyang Normal University, Henan 464000, China}
\email{jiabinyin@126.com}
	
\thanks{2020 {\it Mathematics Subject Classification:}  47J10; 53C40; 53C65}
\thanks{The second author was supported by the NSF of China (Grant No.11401537) and the NSF of Henan Province (Grant No. 252300421476).
The third author was supported by NSF of China (Grant No.12201138) and Mathematics Tianyuan fund project (Grant No.12226350).}
\thanks{$^{*}$Corresponding author.}

\date{}

\keywords{differential forms; weighted $p$-Laplacian; weighted Weitzenb\"{o}ck curvature; eigenvalue estimates; submanifolds}

\begin{abstract}
We introduce the weighted $p$-Laplace operator acting on differential forms on a metric measure space, which is a natural generalization of the $p$-Laplace operator defined by Seto \cite{Seto}. We obtain some sharp lower bounds of the first nonzero eigenvalue for the weighted $p$-Laplacian. Our results extend
an estimate of Seto \cite{Seto}, as well as the eigenvalue estimates derived by Cui-Sun \cite{Cui-Sun} for closed submanifolds.
\end{abstract}
\maketitle

\numberwithin{equation}{section}
\section{Introduction}\label{sect:1}

Let $(M,g)$ be an $n$-dimensional compact Riemannian manifold. For each integer $0 \leq k \leq n$, the Hodge Laplacian acting on $k$-forms of $M$ is defined by
$$\Delta\omega=d\delta\omega+\delta{d}\omega,\ \ \ \forall \ \omega \in  \Omega^{k}(M),$$
where $d$ and $\delta$ are the exterior differential and co-differential operators on $\Omega^{k}(M)$, respectively.
Eigenvalue problems of the Hodge Laplacian provide key insights into
the relationship between geometry and topology of Riemannian manifolds.
Therefore, it is very important to obtain characterizations of the eigenvalues of the Hodge Laplacian in terms of geometric invariants, particularly curvature (see e.g. \cite{Chavel,Kwong,Li1,Matei,Raulot-Savo,Savo,Savo1, Schoen-Yau}).
For manifolds without boundary, we point out the work of Gallot and Meyer \cite{Gallot-Meyer,Gallot-Meyer1} who establish the following estimate for the first eigenvalue by utilizing lower bounds of the curvature operator:
\begin{theorem}\label{thm:1.1}{\rm (\cite{Gallot-Meyer,Gallot-Meyer1})}
Let $M$ be an $n$-dimensional closed Riemannian manifold with
the curvature operator bounded from below by $c\in\mathbb{R}$ and $1\leq k \leq\frac{n}{2}$. Then the first nonzero eigenvalue $\lambda_{1,k}$ of
the Hodge Laplacian $\Delta$ satifies
\begin{equation}\label{1.1}	
  \lambda_{1,k}\geq k(n-k+1)c.
\end{equation}
The equality holds for the $n$-sphere with the usual metric.
\end{theorem}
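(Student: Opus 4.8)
The plan is to run the Bochner technique for $k$-forms, using two ingredients beyond the classical Weitzenb\"ock formula: (i) the algebraic estimate, due to Gallot and Meyer, that the Weitzenb\"ock curvature operator $\mathfrak q(R)$ acting on $k$-forms satisfies $\mathfrak q(R)\ge k(n-k)c$ pointwise whenever the curvature operator of $M$ is bounded below by $c$ (with equality on constant curvature space forms); and (ii) a refined Kato-type inequality bounding $|\nabla\omega|^2$ from below by $|d\omega|^2$ and $|\delta\omega|^2$, obtained from the orthogonal decomposition of $T^*M\otimes\Lambda^k$ into irreducible $O(n)$-summands. Fix a $k$-eigenform $\omega$ with $\Delta\omega=\lambda\omega$, $\lambda=\lambda_{1,k}>0$.

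First I would write the Weitzenb\"ock formula $\Delta\omega=\nabla^*\nabla\omega+\mathfrak q(R)\omega$, pair it with $\omega$, and integrate over the closed manifold $M$, which gives
\[
\lambda\int_M|\omega|^2=\int_M|\nabla\omega|^2+\int_M\langle\mathfrak q(R)\omega,\omega\rangle\ \ge\ \int_M|\nabla\omega|^2+k(n-k)c\int_M|\omega|^2,
\]
where the inequality is ingredient (i). I would also record the elementary identity obtained by pairing $\Delta\omega=d\delta\omega+\delta d\omega$ with $\omega$ and integrating by parts, namely $\int_M|d\omega|^2+\int_M|\delta\omega|^2=\lambda\int_M|\omega|^2$.

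For ingredient (ii): write $\nabla\omega\in\Gamma(T^*M\otimes\Lambda^k)$. The wedge map $\epsilon\colon\xi\otimes\eta\mapsto\xi\wedge\eta$ onto $\Lambda^{k+1}$ and the contraction map $\iota\colon\xi\otimes\eta\mapsto\iota_{\xi^\sharp}\eta$ onto $\Lambda^{k-1}$ satisfy $\epsilon\epsilon^*=(k+1)\,\mathrm{Id}$ and $\iota\iota^*=(n-k+1)\,\mathrm{Id}$, their adjoints $\epsilon^*,\iota^*$ have mutually orthogonal images (distinct irreducible $O(n)$-summands), and $d\omega=\epsilon(\nabla\omega)$, $\delta\omega=-\iota(\nabla\omega)$. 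Hence, for the associated orthogonal projections $P_\pm$ of $T^*M\otimes\Lambda^k$, one has $|d\omega|^2=(k+1)|P_+\nabla\omega|^2$ and $|\delta\omega|^2=(n-k+1)|P_-\nabla\omega|^2$, giving the pointwise bound
\[
|\nabla\omega|^2\ \ge\ |P_+\nabla\omega|^2+|P_-\nabla\omega|^2\ =\ \frac{1}{k+1}\,|d\omega|^2+\frac{1}{n-k+1}\,|\delta\omega|^2.
\]
Integrating and using $\int_M|d\omega|^2+\int_M|\delta\omega|^2=\lambda\int_M|\omega|^2$, the right-hand side is smallest when the mass is shifted entirely onto the term with the smaller coefficient, which is $\tfrac{1}{n-k+1}$ precisely because $k\le n/2$; thus $\int_M|\nabla\omega|^2\ge\frac{\lambda}{n-k+1}\int_M|\omega|^2$. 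Substituting into the first displayed inequality gives $\lambda\bigl(1-\tfrac{1}{n-k+1}\bigr)\ge k(n-k)c$, that is, $\lambda\ge k(n-k+1)c$, which is \eqref{1.1}.

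For the equality statement I would invoke the known spectrum of the round sphere $S^n$, on which the curvature operator equals $c\,\mathrm{Id}$: there the first nonzero eigenvalue on exact $k$-forms equals $k(n-k+1)c$, which for $k\le n/2$ does not exceed $(k+1)(n-k)c$, the first eigenvalue on coexact $k$-forms (since $k(n-k+1)-(k+1)(n-k)=2k-n\le 0$); and for the corresponding eigenform every inequality above becomes an equality (the Gallot--Meyer estimate is sharp on space forms, $d\omega=0$ since $\omega$ is exact, and the Cartan component of $\nabla\omega$ vanishes). The main obstacle is establishing ingredient (i), the curvature lemma $\mathfrak q(R)|_{\Lambda^k}\ge k(n-k)c$: this requires expressing $\mathfrak q(R)$ via the action of the curvature operator through the $\mathfrak{so}(n)$-representation on $\Lambda^k$ and carefully estimating the resulting quadratic form; the remainder is the standard Bochner argument together with elementary optimization.
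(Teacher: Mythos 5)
Your proof is correct and follows essentially the same route the paper uses: the paper quotes Theorem \ref{thm:1.1} from Gallot--Meyer without reproving it, but its proof of the generalization (Theorem \ref{thm:1.3}, via the curvature bound \eqref{eqn:2.3}, the orthogonal decomposition \eqref{2.12} underlying Proposition \ref{prop:2.4}, and the Bochner argument of Section 4) specializes at $p=2$, $f$ constant, $N=n-k+1$ to exactly your argument. Your handling of the $k\le n/2$ optimization and the sphere equality case is consistent with Remark \ref{rem:1.1}.
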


In the past two decades, increasing attention has been focused on eigenvalue
problems for nonlinear operators.
Motivated from the variational characterization of the Laplacian eigenvalue problem, Seto \cite{Seto} computed the Euler-Lagrange equation of the $L^p$-Dirichlet integral on $k$-forms introduced in \cite{Scott} by:
\begin{equation}\label{eqn:1.2}
  \mathcal{F}[\omega]:=\int_{M}\big(|d\omega|^p+|\delta\omega|^p\big), \ \ \ \forall \ \omega\in\Omega^k(M),
\end{equation}
and derived the definition of the $p$-Laplacian acting on $k$-forms:
$$\Delta_{p}\omega:=d\big(|\delta\omega|^{p-2}\delta\omega\big)
+\delta\big(|d\omega|^{p-2}d\omega\big), \ \ \ \forall \ \omega\in\Omega^{k}(M).$$
When $p=2$, this becomes the usual Hodge Laplacian. For $p\neq 2$ and $k=0$, $\Delta_{p}$ becomes the usual $p$-Laplacian.
The corresponding eigenvalue equation is given by
\begin{equation}
\Delta_{p}\omega=\lambda|\omega|^{p-2}\omega,  \ \ \ \omega\in\Omega^{k}(M).
\end{equation}
Eigenvalue estimates for the $p$-Laplacian acting on functions are intensively studied in a huge literature (e.g. \cite{Lindqvist,Matei,Naber-Valtorta,Seto-Wei}). Regarding eigenvalue estimates for the $p$-Laplacian acting on differential forms, Seto \cite{Seto} proved the following result:
\begin{theorem}\label{thm:1.2}{\rm (\cite{Seto})}
Let $M$ be an $n$-dimensional closed Riemannian manifold with
the curvature operator bounded from below by $c\in \mathbb R$ and $p\geq 2$. Then the first nonzero eigenvalue $\lambda_{1,k}$ of the $p$-Laplacian $\Delta_{p}$ acting on $k$-forms of $M$ satisfies
\begin{equation}\label{1.3}	
  \lambda_{1,k}\geq\left(\frac{k(n-k)c}{2^{1-\frac{2}{p}}
  \big(C+\frac{p-2}{2}\big)}\right)^{\frac{p}{2}},
\end{equation}
where $$C={\rm max}\left\{\frac{k}{k+1},\frac{n-k}{n-k+1}\right\}.$$
\end{theorem}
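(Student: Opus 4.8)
The plan is to run the Bochner method that underlies Gallot--Meyer's Theorem~\ref{thm:1.1}, adapted to the nonlinear operator $\Delta_{p}$, following the scheme of Seto. Let $\omega\in\Omega^{k}(M)$ be an eigenform for $\lambda_{1,k}$, so that $\Delta_{p}\omega=\lambda_{1,k}|\omega|^{p-2}\omega$; taking the $L^{2}$ pairing with $\omega$ and integrating gives the variational identity
\[
  \lambda_{1,k}\int_{M}|\omega|^{p}=\int_{M}\bigl(|d\omega|^{p}+|\delta\omega|^{p}\bigr).
\]
The crux is a pointwise ``$p$-Bochner'' identity for $\omega$: one computes the divergence of the vector field naturally attached to $\omega$ by the $L^{p}$-energy $\mathcal F$ of \eqref{eqn:1.2} --- concretely a field built from $\omega$, $\nabla\omega$ and the weights $|d\omega|^{p-2}$, $|\delta\omega|^{p-2}$ --- and then substitutes the Weitzenb\"ock decomposition $\Delta\omega=\nabla^{*}\nabla\omega+\mathcal W_{k}(\omega)$.

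Three kinds of terms then appear. First, the Weitzenb\"ock curvature term $\langle\mathcal W_{k}(\omega),\omega\rangle$: since the curvature operator is bounded below by $c$, one has pointwise $\langle\mathcal W_{k}(\omega),\omega\rangle\ge k(n-k)c\,|\omega|^{2}$, which is exactly the estimate that drives Theorem~\ref{thm:1.1}. Second, nonnegative quadratic terms of Hessian type, bounded below by $|\nabla\omega|^{2}$; to these one applies the refined Kato inequality, which in the closed (resp.\ co-closed) regime sharpens $|\nabla\omega|^{2}\ge|\nabla|\omega||^{2}$ by the factor $\tfrac{n-k+1}{n-k}$ (resp.\ $\tfrac{k+1}{k}$), so the reciprocal of this gain is at most $C$. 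Third, the genuinely nonlinear term coming from differentiating $|d\omega|^{p-2}$ and $|\delta\omega|^{p-2}$, which carries the factor $p-2$ and, after symmetrization, enters with weight $\tfrac{p-2}{2}$. Combining the last two contributions, the Hessian-plus-nonlinear part is dominated by $\bigl(C+\tfrac{p-2}{2}\bigr)\bigl(|d\omega|^{2}+|\delta\omega|^{2}\bigr)$, and the elementary convexity inequality $|d\omega|^{2}+|\delta\omega|^{2}\le 2^{1-2/p}\bigl(|d\omega|^{p}+|\delta\omega|^{p}\bigr)^{2/p}$ (valid because $p\ge2$, from convexity of $t\mapsto t^{p/2}$) rewrites this in terms of the $L^{p}$-quantities that enter $\lambda_{1,k}$.

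Integrating the resulting pointwise inequality over the closed manifold $M$, the divergence term integrates to zero; collecting the three contributions and inserting the variational identity above, one is left with
\[
  k(n-k)c\ \le\ 2^{1-2/p}\Bigl(C+\tfrac{p-2}{2}\Bigr)\lambda_{1,k}^{2/p},
\]
which is precisely \eqref{1.3}. As a consistency check, for $p=2$ this reads $\lambda_{1,k}\ge k(n-k)c/C=k(n-k+1)c$, recovering Theorem~\ref{thm:1.1}.

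I expect the main difficulty to be the $p$-Bochner identity itself together with its rigorous justification. Because $p\ge2$, the weights $|d\omega|^{p-2}$ and $|\delta\omega|^{p-2}$ are only H\"older continuous and may vanish, so the divergence identity must be obtained after the regularization $|d\omega|^{p-2}\rightsquigarrow(\varepsilon+|d\omega|^{2})^{(p-2)/2}$ (and similarly for $\delta\omega$), with a passage to the limit $\varepsilon\to0$ that also needs the available regularity of first eigenforms to legitimize the integrations by parts. The second delicate point is bookkeeping: the quadratic terms produced by $\mathcal W_{k}$ and by the nonlinearity must be grouped so that exactly the refined-Kato constant $C$ and the weight $\tfrac{p-2}{2}$ survive --- in particular the closedness of $d\omega$ and the co-closedness of $\delta\omega$ (equivalently, a reduction of a general eigenform to the closed and co-closed cases) must be used in the right places --- and the convexity step must be applied so as not to waste any constant beyond the stated $2^{1-2/p}$.
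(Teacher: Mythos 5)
Your overall architecture does match the proof the paper gives (Theorem \ref{thm:1.2} is the special case of Theorem \ref{thm:1.3} with $f$ constant and $N=n-k+1$): one pairs the eigenvalue equation with the \emph{linear} Hodge Laplacian, i.e.\ computes $\int_M\langle\Delta_p\omega,\Delta\omega\rangle$ in two ways --- once via the Bochner formula \eqref{2.2}, once by integration by parts --- estimates the two cross terms carrying $\nabla(|\omega|^{p-2})$ by Cauchy--Schwarz and AM--GM (each contributing $\tfrac{p-2}{2}$), and finishes with H\"older, the convexity inequality, and $\int(|d\omega|^p+|\delta\omega|^p)=\lambda\int|\omega|^p$. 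The genuine gap is the step you assign to the ``refined Kato inequality''. In the actual argument the constant $C$ does not come from comparing $|\nabla\omega|^2$ with $|\nabla|\omega||^2$: the $|\nabla|\omega||^2$ terms cancel \emph{identically} between the Bochner side (which carries $(p-2)\,|\nabla|\omega||^2$) and the two cross-term estimates (which together also carry $(p-2)\,|\nabla|\omega||^2$), and no Kato-type inequality enters at all. What produces $C$ is the Gallot--Meyer orthogonal decomposition \eqref{2.12} of $\nabla_X\omega$ into its $d$-part, $\delta$-part and twistor part, valid for an \emph{arbitrary} $k$-form, which yields the pointwise bound $|\nabla\omega|^2\ge\tfrac{1}{k+1}|d\omega|^2+\tfrac{1}{n-k+1}|\delta\omega|^2$ (Proposition \ref{prop:2.4} with $f$ constant); subtracting this from the $(1+\tfrac{p-2}{2})(|d\omega|^2+|\delta\omega|^2)$ on the other side leaves the coefficients $1-\tfrac{1}{k+1}=\tfrac{k}{k+1}$ and $1-\tfrac{1}{n-k+1}=\tfrac{n-k}{n-k+1}$, whose maximum is $C$. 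The numerical coincidence with the reciprocal refined-Kato constants is not an accident (both come from the same decomposition), but the inequality you invoke is not the one that does the work.

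The Kato route cannot be repaired here: (i) the refined Kato inequality requires $\omega$ to be closed and co-closed, and an eigenform of the nonlinear operator $\Delta_p$ with $p\neq2$ admits no Hodge-type splitting into closed and co-closed eigenforms, so the hypothesis is unavailable --- your parenthetical ``reduction of a general eigenform to the closed and co-closed cases'' is exactly the step that does not exist; (ii) even granting it, it bounds $|\nabla\omega|^2$ below by a multiple of $|\nabla|\omega||^2$, whereas what must be absorbed is $|d\omega|^2+|\delta\omega|^2$, and there is no way to convert one into the other. Two smaller points. First, the pointwise convexity inequality alone does not turn $\int|\omega|^{p-2}(|d\omega|^2+|\delta\omega|^2)\,dv$ into the $L^p$ quantities defining $\lambda_{1,k}$; you also need H\"older with exponents $\tfrac{p}{p-2}$ and $\tfrac{p}{2}$ to detach the factor $|\omega|^{p-2}$, as in \eqref{eqn:4.6}--\eqref{eqn:4.7}. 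Second, the regularization of $|d\omega|^{p-2}$ that you flag as the main difficulty is largely moot: the identity actually used is the integral one obtained by testing the weak eigenvalue equation against $\Delta\omega$, not a pointwise divergence identity, so only the distributional formulation and the Bochner formula for the smooth quantity $\Delta\omega$ are required.
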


A natural problem is whether the above conclusion can be extended to metric measure spaces?

Now recall that a compact metric measure space $(M,g,e^{-f}dv)$ is a compact Riemannian manifold $(M,g)$ with the weighted measure $e^{-f}dv$, where $dv$ is the Riemannian volume element related to the metric $g$, $f$ is a smooth real-valued function on $M$.
A metric measure space is also called a weighted manifold or a Bakry-\'{E}mery manifold.
The initial motivation for studying such kind of manifolds was to
 model diffusion processes \cite{Bakry-Emery}. However, they have by now become famous in the study of self-similar solutions of the Ricci flow, the so-called Ricci solitons. Also they appear in the analysis of shrinkers, which represent a special class of solutions of the mean curvature flow. We refer to \cite{Colding-Minicozzi} and \cite{Impera-Rimoldi-Savo} for more details.

Denote by $\nabla$, $\Delta$ and $\nabla^2$ the gradient, the Hodge Laplacian and the Hessian on $M$, respectively.
The adjoint of the exterior derivative $d$ with respect to the measure $d\mu=e^{-f}dv$ is given by
\begin{equation}\label{eqn:1.4}	
  \delta_f:=\delta+\iota_{\nabla f},
\end{equation}
where $\iota_{\nabla\mathnormal{f}}$ denotes the interior product associated with the gradient vector field $\nabla f$.
Define the weighted Hodge Laplacian $\Delta_f$ as follows:
\begin{equation}\label{eqn:1.5}	
  \Delta_f:=\delta_f d+d\delta_f: \ \Omega^k(M) \to \Omega^k(M).
\end{equation}
For compact Riemannian manifolds without boundary or with convex boundary, Bakry-Qian \cite{Bakry-Qian} established a uniform lower bound estimate for the first nonzero eigenvalue of the weighted Laplacian acting on functions. For the case of compact Riemannian manifolds with boundary, Futaki-Li-Li \cite{Futaki-Li} proved a lower bound for the first nonzero eigenvalue of the weighted Laplacian;
Branding-Habib \cite{Branding-Habib} derived various eigenvalue estimates for the weighted Hodge Laplacian acting on differential forms.

Motivated by Seto's work \cite{Seto} on the $p$-Laplacian, we consider the $L^p$-Dirchlet integral for $k$-forms on a closed metric measure space $(M,g,d\mu)$ with respect to the weighted measure $d\mu$ as follows:
\begin{equation}\label{eqn:1.6}
  \mathcal{F}_{f}[\omega]
  :=\int_{M}\big(|d\omega|^p+|\delta_{f}\omega|^p\big)d\mu, \ \ \ \omega \in\Omega^k(M).
\end{equation}
Denote the space of the weighted harmonic $k$-forms (introduced in \cite{Dung},\cite{Wang-Li}) by
\begin{equation*}
  \mathcal{H}^k_{f}(M):=\left\{\omega\in\Omega^k(M)~|~d\omega=0, \ \delta_{f}\omega=0\right\}.
\end{equation*}
Observe that \(\mathcal{F}_{f}[\omega] = 0\) if and only if $\omega\in{H}^k_{f}(M)$, that is, the minimum is zero and is attained for the weighted harmonic $k$-forms. For a nonzero infimum, we consider the space
\begin{equation*}
  A^{k}_{\mu}(M):=\Big\{\omega\in
  \mathcal{W}^{1,p}_{\mu}(\Omega^k(M))\Big|\int_{M}|\omega|^p d\mu=1,\int_{M}|\omega|^{p-2}\langle\omega,\varphi\rangle d\mu=0,\forall~\varphi\in\mathcal{H}^k_{f}(M)\Big\}
\end{equation*}
where $\mathcal{W}^{1,p}_{\mu}(\Omega^k(M))$ is the weighted $(1,p)$-Sobolev space of differential $k$-forms. Computing the Euler-Langrange equation of
\eqref{eqn:1.6} leads us to the definition of the following operator:
\begin{definition}
For any integer $p\geq 2$, the weighted $p$-laplace operator is defined as follows
\begin{equation}\label{eqn:1.7}
  \Delta_{p,f}\omega=d(|\delta_{f}\omega|^{p-2}\delta_{f}\omega)
  +\delta_{f}(|d\omega|^{p-2}d\omega), \ \ \ \forall \ \omega\in\Omega^{k}(M).
\end{equation}
\end{definition}

When $f$ is a constant, the weighted $p$-Laplace operator is the $p$-Laplacian $\Delta_p$.
The spectrum of the weighted $p$-Laplacian acting on smooth functions has been studied on compact metric measure spaces
with or without boundaries (see \cite{Du-Mao-Wang-Xia,Sun-Han-Zeng,Wang,Wang-Li}).
The corresponding eigenvalue equation is given by
\begin{equation}\label{eqn:1.8}
  \Delta_{p,f}\omega=\lambda|\omega|^{p-2}\omega, \ \ \ \omega\in\Omega^{k}(M).
\end{equation}
For the first nonzero eigenvalue $\lambda_{1,k,f}$ of the weighted $p$-Laplacian, the variational principle tells us that
\begin{equation*}
  \lambda_{1,k,f}=\inf\big\{\mathcal{F}_{f}[\omega]~
  |~\omega\in{A_{\mu}^k(M)}\big\}.
\end{equation*}

In this paper, we give the following lower bound estimate for the first eigenvalue of the weighted $p$-Laplacian acting on differential forms on a metric measure space having a lower bound on ${\rm Ric}_{N,f}^{[k]}$
(see \eqref{def:2.8} for the precise definition):

\begin{theorem}\label{thm:1.3}
Let $(M, g, e^{-f}dv)$ be an $n$-dimensional closed metric measure space with ${\rm Ric}_{N,f}^{[k]}\geq k(n-k)c$ for some constant $c$ and $p\geq 2$. Then the first nonzero eigenvalue $\lambda_{1,k,f}$ of the weighted $p$-Laplacian $\Delta_{p,f}$ satifies
\begin{equation}\label{1.9}	
  \lambda_{1,k,f}\geq\left(\frac{k(n-k)c}
  {2^{1-\frac{2}{p}}(\widetilde{C}+\frac{p-2}{2})}\right)^{\frac{p}{2}},
\end{equation}
where
$$\widetilde{C}={\rm max}\Big\{\frac{k}{k+1},\frac{N-1}{N}\Big\}.$$
\end{theorem}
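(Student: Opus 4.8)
The plan is to run a weighted, nonlinear version of the Gallot--Meyer--Seto Bochner argument. Let $\omega$ be a $\lambda_{1,k,f}$-eigenform of $\Delta_{p,f}$; after rescaling we may assume $\omega\in A^k_\mu(M)$, so that $\int_M|\omega|^p\,d\mu=1$. Pairing the eigenvalue equation \eqref{eqn:1.8} with $\omega$ and integrating by parts against $d\mu$ (using that $\delta_f$ is the $L^2(d\mu)$-adjoint of $d$) gives
\[
 \lambda:=\lambda_{1,k,f}=\int_M\big(|d\omega|^p+|\delta_f\omega|^p\big)\,d\mu .
\]
Put $a^p=\int_M|d\omega|^p\,d\mu$ and $b^p=\int_M|\delta_f\omega|^p\,d\mu$, so $\lambda=a^p+b^p$. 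It then suffices to establish the integral inequality
\[
 k(n-k)c\ \le\ \Big(\widetilde C+\tfrac{p-2}{2}\Big)\int_M|\omega|^{p-2}\big(|d\omega|^2+|\delta_f\omega|^2\big)\,d\mu .
\]
Indeed, Hölder's inequality with the weight $|\omega|^{p-2}$ together with the normalization gives $\int_M|\omega|^{p-2}|d\omega|^2\,d\mu\le a^2$ and $\int_M|\omega|^{p-2}|\delta_f\omega|^2\,d\mu\le b^2$; concavity of $t\mapsto t^{2/p}$ yields $a^2+b^2\le 2^{1-2/p}(a^p+b^p)^{2/p}=2^{1-2/p}\lambda^{2/p}$; substituting and rearranging produces exactly \eqref{1.9}.

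To obtain the displayed inequality I would multiply the weighted Bochner--Weitzenb\"ock identity for the $k$-form $\omega$,
\[
 \tfrac12\Delta_f|\omega|^2=\langle\Delta_f\omega,\omega\rangle-|\nabla\omega|^2-\langle\mathcal W_f^{[k]}\omega,\omega\rangle ,
\]
by $|\omega|^{p-2}$ and integrate against $d\mu$. Since $\int_M\Delta_f(\,\cdot\,)\,d\mu=0$, the left-hand side integrates by parts to $-(p-2)\int_M|\omega|^{p-2}|\nabla|\omega||^2\,d\mu$. For the curvature term, the definition \eqref{def:2.8} of ${\rm Ric}^{[k]}_{N,f}$ is arranged so that $\langle\mathcal W_f^{[k]}\omega,\omega\rangle$ is bounded below by $\langle{\rm Ric}^{[k]}_{N,f}\omega,\omega\rangle$ plus a nonnegative drift remainder coming from the $\iota_{\nabla f}$-part (this is where the Bakry--\'Emery dimension $N\ge n$ enters), so the hypothesis gives $\int_M|\omega|^{p-2}\langle\mathcal W_f^{[k]}\omega,\omega\rangle\,d\mu\ge k(n-k)c$. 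It then remains to estimate $\int_M|\omega|^{p-2}\langle\Delta_f\omega,\omega\rangle\,d\mu$ from above: a further integration by parts rewrites it as $\int_M|\omega|^{p-2}(|d\omega|^2+|\delta_f\omega|^2)\,d\mu$ plus cross terms in which $d(|\omega|^{p-2})$ and $\iota_{\nabla|\omega|^{p-2}}$ appear; Cauchy--Schwarz and Young's inequality control these, and they are responsible for the coefficient $\tfrac{p-2}{2}$. The residual terms involving $\int_M|\omega|^{p-2}|\nabla|\omega||^2\,d\mu$ are absorbed against $-\int_M|\omega|^{p-2}|\nabla\omega|^2\,d\mu$ by a refined Kato-type inequality for differential forms (in the spirit of \cite{Seto}), which, applied to the $d$- and $\delta_f$-parts of $\nabla\omega$, is what produces the constant $\widetilde C=\max\{\tfrac{k}{k+1},\tfrac{N-1}{N}\}$; the second entry is $\tfrac{N-1}{N}$ rather than $\tfrac{n-k}{n-k+1}$ (as in Seto's Theorem~\ref{thm:1.2}) precisely because the drift term $\iota_{\nabla f}$ in $\delta_f$ costs a dimensional factor, forcing $N$ into that slot. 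Collecting the constants gives the desired inequality.

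I expect the main obstacle to be the weighted Weitzenb\"ock bookkeeping: pinning down \eqref{def:2.8}, verifying that the $\iota_{\nabla f}$-correction in the weighted Bochner formula is genuinely absorbable as a nonnegative remainder under $N\ge n$, and tracking how $N$ replaces $n$ in the refined Kato estimate; this is the one essentially new ingredient beyond \cite{Seto}. A secondary technical point is the constant optimization: the Young parameters in the estimate of the cross terms and the balance against $-|\nabla\omega|^2$ must be tuned so that the coefficient in front of $\int_M|\omega|^{p-2}(|d\omega|^2+|\delta_f\omega|^2)\,d\mu$ comes out exactly $\widetilde C+\tfrac{p-2}{2}$ and no larger, which forces the refined Kato inequality to be used in sharp form. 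Once these are in place, the passage to the $L^p$-norms via Hölder and the power-mean inequality is routine and delivers the stated exponent $p/2$ and the factor $2^{1-2/p}$.
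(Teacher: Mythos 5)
Your overall strategy is the same as the paper's: integrate the weighted Bochner identity against $|\omega|^{p-2}\,d\mu$, estimate the cross terms by Cauchy--Schwarz and Young to produce the $\tfrac{p-2}{2}$, use the orthogonal decomposition of $\nabla\omega$ (the refined Kato inequality of Gallot--Meyer type) to produce $\widetilde C$, and finish with H\"older plus the power-mean inequality to reach the $L^p$ norms; your endgame (the reduction via $a^p+b^p=\lambda$, $a^2+b^2\le 2^{1-2/p}\lambda^{2/p}$) is exactly the paper's and is correct.

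There are, however, two bookkeeping slips in the middle that as written do not close. First, with the paper's sign convention ($\Delta_f=\delta_f d+d\delta_f$, a nonnegative operator), $\tfrac12\int_M|\omega|^{p-2}\Delta_f|\omega|^2\,d\mu=+(p-2)\int_M|\omega|^{p-2}\big|\nabla|\omega|\big|^2\,d\mu$, not $-(p-2)\int(\cdots)$; and this term is not ``absorbed against $-|\nabla\omega|^2$ via refined Kato'' --- it cancels \emph{exactly} against the $(p-2)\int|\omega|^{p-2}|\nabla|\omega||^2$ produced by the two Young estimates of the cross terms, which is why only $\tfrac{p-2}{2}$ survives in front of $|d\omega|^2+|\delta_f\omega|^2$. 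Second, and more seriously, you spend the $|\iota_{\nabla f}\omega|^2$ term twice. You first bound $\langle{\rm Ric}_f^{[k]}\omega,\omega\rangle\ge\langle{\rm Ric}_{N,f}^{[k]}\omega,\omega\rangle$ by discarding the nonnegative remainder $\tfrac{1}{N-(n-k+1)}|\iota_{\nabla f}\omega|^2$, and then you also invoke the $N$-weighted Kato estimate $|\nabla\omega|^2\ge\tfrac{1}{k+1}|d\omega|^2+\tfrac1N|\delta_f\omega|^2-\tfrac{1}{N-(n-k+1)}|\iota_{\nabla f}\omega|^2$ (obtained from $\tfrac{1}{n-k+1}|\delta\omega|^2=\tfrac{1}{n-k+1}|\delta_f\omega-\iota_{\nabla f}\omega|^2$). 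The negative term in the latter must be cancelled by the remainder you threw away in the former; if you drop the remainder early you are left with an uncontrolled $-\tfrac{1}{N-(n-k+1)}|\iota_{\nabla f}\omega|^2$. The correct accounting is to keep ${\rm Ric}_f^{[k]}$ intact, add the Kato estimate, and observe that the combination $-\tfrac{1}{N-(n-k+1)}|\iota_{\nabla f}\omega|^2+\langle{\rm Ric}_f^{[k]}\omega,\omega\rangle$ is by definition \eqref{def:2.8} exactly $\langle{\rm Ric}_{N,f}^{[k]}\omega,\omega\rangle$, after which the hypothesis applies. (Also, the relevant dimensional condition is $N>n-k+1$, not $N\ge n$.) With these two corrections your outline becomes the paper's proof.
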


When $1\leq k\leq\frac{n}{2}$, we immediately obtain the following corollary:

\begin{corollary}
Let $(M, g, e^{-f}dv)$ be an $n$-dimensional closed metric measure space and $p\geq 2$. If ${\rm Ric}_{N,f}^{[k]}\geq k(n-k)c$ for some constant $c$ and $1\leq k\leq \frac{n}{2}$, then the first nonzero eigenvalue $\lambda_{1,k,f}$ of the weighted $p$-Laplacian $\Delta_{p,f}$ satifies
\begin{equation}\label{1.11}	
  \lambda_{1,k,f}\geq\left(\frac{k(n-k)c}
  {2^{1-\frac{2}{p}}\big(\frac{N-1}{N}+\frac{p-2}{2}\big)}\right)^{\frac{p}{2}}.
\end{equation}
\end{corollary}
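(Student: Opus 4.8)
The plan is to reduce the Corollary to Theorem~\ref{thm:1.3}, whose proof I will assume, by simply observing that the only place $k\le n/2$ enters is in the evaluation of the constant $\widetilde C$. I would first recall that
\[
\widetilde C=\max\Big\{\frac{k}{k+1},\frac{N-1}{N}\Big\},
\]
and note that the right-hand side of \eqref{1.9} is a decreasing function of $\widetilde C$ (for fixed $k$, $n$, $c$, $p$), so it suffices to produce an upper bound for $\widetilde C$. Under the hypothesis $1\le k\le n/2$ I want to show that the first member of the max is dominated by the second, i.e. $\frac{k}{k+1}\le\frac{N-1}{N}$, which would force $\widetilde C=\frac{N-1}{N}$ and immediately turn \eqref{1.9} into \eqref{1.11}.

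The key step is therefore the elementary inequality $\frac{k}{k+1}\le\frac{N-1}{N}$. Here one must be a little careful about the range of $N$: in the Bakry--\'Emery formalism $N$ is the effective dimension parameter with $N\ge n$ (or at least $N>n\ge 2k\ge 2$), and $t\mapsto \frac{t}{t+1}=1-\frac{1}{t+1}$ is increasing in $t$, while $t\mapsto\frac{t-1}{t}=1-\frac{1}{t}$ is increasing in $t$ as well; comparing, $\frac{k}{k+1}\le\frac{N-1}{N}$ is equivalent to $kN\le (k+1)(N-1)$, i.e. $0\le N-k-1$, i.e. $N\ge k+1$. Since $k\le n/2\le n\le N$ and $n\ge 2$ forces $N\ge 2\ge k+1$ when $k=1$, and more generally $N\ge n\ge 2k\ge k+1$ for $k\ge 1$, the inequality $N\ge k+1$ always holds under the stated hypotheses. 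Hence $\widetilde C=\frac{N-1}{N}$.

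With $\widetilde C=\frac{N-1}{N}$ in hand I would then just substitute into \eqref{1.9}:
\[
\lambda_{1,k,f}\ \ge\ \left(\frac{k(n-k)c}{2^{1-\frac{2}{p}}\big(\widetilde C+\frac{p-2}{2}\big)}\right)^{\frac{p}{2}}
=\left(\frac{k(n-k)c}{2^{1-\frac{2}{p}}\big(\frac{N-1}{N}+\frac{p-2}{2}\big)}\right)^{\frac{p}{2}},
\]
which is exactly \eqref{1.11}. (If one is worried about the case $k(n-k)c\le 0$, the bound is vacuous or trivial and there is nothing to prove, so I would only need the argument when the right-hand side is positive, i.e. $c>0$.)

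The proof is essentially immediate once Theorem~\ref{thm:1.3} is granted, so I do not expect any genuine obstacle; the only point requiring (minimal) care is verifying $N\ge k+1$ in the relevant range of $k$ and $N$, which is what makes the max collapse to its second term. Everything else is a direct substitution, and no new estimate, no new Weitzenb\"ock or Bochner computation, and no reworking of the variational argument is needed.
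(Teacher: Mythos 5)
Your proposal is correct and is exactly the paper's (implicit, one-line) derivation: under $1\le k\le n/2$ the maximum defining $\widetilde C$ collapses to $\frac{N-1}{N}$, and substituting into \eqref{1.9} gives \eqref{1.11}. The only imprecision is in your justification of $N\ge k+1$: you invoke $N\ge n$, but the paper's convention for ${\rm Ric}_{N,f}^{[k]}$ is $N\ge n-k+1$ (indeed $N>n-k+1$ in Proposition \ref{prop:2.4}), which for $k\ge 2$ permits $N<n$, so the chain $k\le n/2\le n\le N$ is not available. The conclusion survives anyway: $k\le n/2$ gives $n-k\ge k$, hence $N\ge n-k+1\ge k+1$, which is all you need for $\frac{k}{k+1}\le\frac{N-1}{N}$. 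With that one-line repair the argument is complete and coincides with the paper's.
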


\begin{remark}\label{rem:1.1}
When $N=n-k+1$, the conclusion \eqref{1.9} becomes the estimate \eqref{1.3} in Theorem \ref{thm:1.2}. Furthermore,
for $1\leq k\leq\frac{n}{2}$ and $p=2$,  the conclusion
\eqref{1.11} recovers the estimate \eqref{1.1} in Theorem \ref{thm:1.1}.
\end{remark}

Moreover, for $p=2$, we have the following lower bound estimate
(see also Proposition 2.9 in \cite{Branding-Habib}).

\begin{corollary}\label{cor:1.2}
Let $(M, g, e^{-f}dv)$ be an $n$-dimensional closed metric measure space with ${\rm Ric}_{N,f}^{[k]}\geq k(n-k)c$ for some constant $c$.
Then the first positive eigenvalue $\lambda'_{1,k,f}$ of the weighted Hodge Laplacian $\Delta_{f}$ restricted to exact $k$-forms satifies
\begin{equation}\label{1.12}	
  \lambda'_{1,k,f}\geq\frac{N}{N-1}k(n-k)c.
\end{equation}
\end{corollary}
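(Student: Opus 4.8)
The plan is to derive Corollary~\ref{cor:1.2} by specializing Theorem~\ref{thm:1.3} to the linear case $p=2$ and then sharpening the constant for exact forms. When $p=2$ the exponent $\tfrac{p}{2}$ equals $1$ and the prefactor $2^{1-\frac{2}{p}}$ equals $1$, so the bound \eqref{1.9} reads $\lambda_{1,k,f}\ge \frac{k(n-k)c}{\widetilde C}$ with $\widetilde C=\max\{\frac{k}{k+1},\frac{N-1}{N}\}$. The point is that when we restrict attention to \emph{exact} $k$-forms $\omega=d\eta$, the term $|d\omega|^2$ drops out of the relevant estimate (since $d\omega=0$), so in the Bochner-type argument underlying Theorem~\ref{thm:1.3} only the refined Kato inequality on the $\delta_f$-side is used. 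This replaces the factor $\widetilde C$ by just $\frac{N-1}{N}$, regardless of the value of $\frac{k}{k+1}$, giving $\lambda'_{1,k,f}\ge \frac{N}{N-1}k(n-k)c$.

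Concretely, I would first recall the weighted Bochner--Weitzenb\"ock formula for the weighted Hodge Laplacian $\Delta_f$ acting on $k$-forms, together with the lower bound ${\rm Ric}_{N,f}^{[k]}\ge k(n-k)c$ on the weighted Weitzenb\"ock curvature term (definition~\eqref{def:2.8}). Let $\omega$ be an eigenform of $\Delta_f$ restricted to exact forms with eigenvalue $\lambda'_{1,k,f}$; since the space of exact $k$-forms is $\Delta_f$-invariant and $L^2(d\mu)$-orthogonal to the weighted harmonic forms, such $\omega$ exists and satisfies $d\omega=0$. Integrating the Bochner formula against $\omega$ with respect to $d\mu$, using $d\omega=0$, yields an identity of the form
\begin{equation*}
  \lambda'_{1,k,f}\int_M |\omega|^2\,d\mu = \int_M |\nabla\omega|^2\,d\mu + \int_M \langle \mathcal{W}^{[k]}_f \omega,\omega\rangle\,d\mu,
\end{equation*}
where $\mathcal{W}^{[k]}_f$ is the weighted Weitzenb\"ock operator; the curvature hypothesis bounds the last integral below by $k(n-k)c\int_M|\omega|^2\,d\mu$.

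Next I would invoke the refined Kato inequality adapted to the weighted setting and to forms lying in the image of $\delta_f$ (the analogue used in the proof of Theorem~\ref{thm:1.3}): for $\omega$ with $d\omega=0$ one has a pointwise bound of the shape $|\nabla\omega|^2 \ge \frac{N}{N-1}\,|d|\omega||^2$ away from the zero set of $\omega$, or equivalently the improved constant coming from the $(N-1)/N$ factor. Combined with the eigenvalue equation $\Delta_f\omega=\lambda'_{1,k,f}\omega$ and the variational characterization of $\lambda'_{1,k,f}$ applied to the scalar function $|\omega|$, this upgrades the naive Bochner estimate $\lambda'_{1,k,f}\ge k(n-k)c$ to $\lambda'_{1,k,f}\ge \frac{N}{N-1}k(n-k)c$, exactly as in the classical Gallot--Meyer improvement for exact forms. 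Equivalently, and perhaps cleanest for exposition: set $p=2$ in Theorem~\ref{thm:1.3}, observe that the max in $\widetilde C$ is irrelevant on exact forms because only the $\delta_f$-Kato constant $\frac{N-1}{N}$ enters, and read off \eqref{1.12}; I would also cross-check against Proposition~2.9 of \cite{Branding-Habib} to confirm the constant.

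The main obstacle I anticipate is bookkeeping rather than conceptual: making precise that on exact forms the term $\frac{k}{k+1}$ in $\widetilde C$ genuinely does not contribute, i.e.\ tracing through the proof of Theorem~\ref{thm:1.3} to see that $d\omega=0$ eliminates the branch of the Kato-type estimate responsible for the $\frac{k}{k+1}$ term, leaving only $\frac{N-1}{N}$. One must also handle the possible zero set of $|\omega|$ (where $|\nabla|\omega||$ is not defined) by the standard regularization $\sqrt{|\omega|^2+\varepsilon^2}$ and a limiting argument, and verify that the weighted integration by parts introduces no boundary terms — immediate here since $M$ is closed. Once these points are checked, \eqref{1.12} follows directly.
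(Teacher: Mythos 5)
Your proposal is correct and follows essentially the same route as the paper: the paper's proof is literally ``take $p=2$ in \eqref{ineq:4.6}'', whereupon $d\omega=0$ for an exact eigenform kills the $\frac{1}{k+1}|d\omega|^2$ branch of Proposition \ref{prop:2.4}, and $\int_M|\delta_f\omega|^2\,d\mu=\lambda'_{1,k,f}\int_M|\omega|^2\,d\mu$ yields $\big(1-\frac{1}{N}\big)\lambda'_{1,k,f}\ge k(n-k)c$, exactly your ``cleanest for exposition'' version. One small caveat: the detour in your third paragraph through a scalar refined Kato inequality of the shape $|\nabla\omega|^2\ge\frac{N}{N-1}\big|d|\omega|\big|^2$ and a variational argument for $|\omega|$ is neither needed nor what Proposition \ref{prop:2.4} provides (the relevant pointwise bound is $|\nabla\omega|^2\ge\frac{1}{N}|\delta_f\omega|^2-\frac{1}{N-(n-k+1)}|\iota_{\nabla f}\omega|^2$, whose last term is absorbed into ${\rm Ric}_{N,f}^{[k]}$); likewise the $\varepsilon$-regularization of $|\omega|$ is superfluous at $p=2$ since the term $(p-2)\big|\nabla|\omega|\big|^2$ vanishes.
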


In what follows, we consider the situation where $M$ is a submanifold. In recent years, significant research efforts have been devoted to eigenvalue problems for the Hodge Laplacian on submanifolds. Among these developments, Raulot-Savo \cite{Raulot-Savo} and Savo \cite{Savo} investigated eigenvalues of the Hodge Laplacian on hypersurfaces immersed into a Riemannian manifold. Cui-Sun \cite{Cui-Sun} studied eigenvalues of the Hodge Laplacian on submanifolds of arbitrary codimension in a Riemannian manifold, and obtained the following optimal lower bound for the first eigenvalue of the Hodge Laplacian acting on differential forms:

\begin{theorem}\label{thm:1.4}{\rm (\cite{Cui-Sun})}
Suppose $F:M\rightarrow\overline{M}$ is an isometric immersion from a closed $n$-dimensional Riemannian manifold into an $(n+m)$-dimensional Riemannian manifold $\overline{M}$ with the pull back Weitzenb\"{o}ck curvature operator $F^* {\overline{\rm Ric}}^{[k]} \geq k(n-k)c$  for some constant $c$ and $1\leq k\leq\frac{n}{2}$. Then the first nonzero eigenvalue $\lambda_{1,k}$ of the Hodge Laplacian $\Delta$ acting on $k$-forms of $M$ satisfies
\begin{equation}\label{eqn:1.10}
  \lambda_{1,k}\geq k(n-k+1)(c+\gamma_k),
\end{equation}
where
\begin{equation*}
  \gamma_{k}=\underset{M}{\rm min}\Big\{|H|^2-\frac{1}{n}\big|\mathring{B}\big|^2-\frac{(n-2k)|H|}
  {\sqrt{nk(n-k)}}\big|\mathring{B}\big|\Big\},
\end{equation*}
 $H$ is the mean curvature vector and $\mathring B$ is the traceless part of the second fundamental form $B$. Moreover, if $M$ is totally umbilical, then
 $$\lambda_{1,k}\geq k(n-k+1)\big(c+\underset{M}{\mathrm {min}}|H|^2\big).$$
\end{theorem}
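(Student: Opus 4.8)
The plan is to run the Bochner/Weitzenb\"{o}ck method underlying Theorem~\ref{thm:1.1} (Gallot--Meyer), the one new ingredient being that the Gauss equation converts the hypothesis on the ambient Weitzenb\"{o}ck curvature into a pointwise lower bound for the \emph{intrinsic} Weitzenb\"{o}ck curvature operator $\mathrm{Ric}_M^{[k]}$ of $M$, with the second fundamental form producing the correction $\gamma_k$.

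I would first reduce the problem to exact forms. By the Hodge decomposition the $\lambda_{1,k}$-eigenspace on $k$-forms splits into an exact and a coexact part; since $d$ and $\delta$ set up a bijection between the positive eigenspaces of $\Delta$ on coexact $j$-forms and on exact $(j{+}1)$-forms, and the Hodge star identifies exact $(k{+}1)$-forms with coexact $(n{-}k{-}1)$-forms, one obtains
\[
  \lambda_{1,k}\ \ge\ \min\{\mu_k^{\mathrm{ex}},\,\mu_{k+1}^{\mathrm{ex}}\}\ \ge\ \min\{\mu_k^{\mathrm{ex}},\,\mu_{n-k}^{\mathrm{ex}}\},
\]
where $\mu_j^{\mathrm{ex}}$ denotes the first eigenvalue of $\Delta$ among exact $j$-forms, $\mu_j^{\mathrm{cex}}$ its coexact analogue, and the last step uses $\mu_{k+1}^{\mathrm{ex}}=\mu_{n-k-1}^{\mathrm{cex}}\ge\mu_{n-k}^{\mathrm{ex}}$. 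For an exact $j$-eigenform $\omega$ one has $d\omega=0$ and $\|\delta\omega\|^2=\lambda\|\omega\|^2$, so the Weitzenb\"{o}ck formula $\Delta=\nabla^*\nabla+\mathrm{Ric}_M^{[j]}$ together with the sharp Kato-type inequality $|\nabla\omega|^2\ge\frac{1}{n-j+1}|\delta\omega|^2$ (the projection of $\nabla\omega$ onto the $\Lambda^{j-1}$-summand of $T^*M\otimes\Lambda^jT^*M$) gives, after integration, $\lambda\,\frac{n-j}{n-j+1}\|\omega\|^2\ge\int_M\langle\mathrm{Ric}_M^{[j]}\omega,\omega\rangle$. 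Hence $\mu_j^{\mathrm{ex}}\ge j(n-j+1)\rho$ whenever $\mathrm{Ric}_M^{[j]}\ge j(n-j)\rho$ pointwise; and it suffices to establish this at level $j=k$, because the Hodge star is a parallel isometry $\Lambda^k\to\Lambda^{n-k}$ conjugating $\mathrm{Ric}_M^{[k]}$ to $\mathrm{Ric}_M^{[n-k]}$, while $k(n-k)=(n-k)\big(n-(n-k)\big)$.

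To obtain the pointwise bound $\mathrm{Ric}_M^{[k]}\ge k(n-k)(c+\gamma_k)$ I would use the Gauss equation, which writes the intrinsic algebraic curvature tensor of $M$ as the tangential part of $F^*\overline R$ plus $\sum_\alpha R^B_\alpha$, where $R^B_\alpha(X,Y,Z,W)=h^\alpha(X,W)h^\alpha(Y,Z)-h^\alpha(X,Z)h^\alpha(Y,W)$, $h^\alpha=\langle B,\nu_\alpha\rangle$, and $\{\nu_\alpha\}$ is a local orthonormal normal frame. Applying the (linear) Weitzenb\"{o}ck construction $q(\cdot)$ termwise gives $\mathrm{Ric}_M^{[k]}=F^*\overline{\mathrm{Ric}}^{[k]}\big|_{\Lambda^kTM}+\sum_\alpha q(R^B_\alpha)$, and the hypothesis bounds the first summand below by $k(n-k)c$. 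Each $q(R^B_\alpha)$ is diagonal in an orthonormal $k$-vector basis adapted to the eigenvalues $t^\alpha_1,\dots,t^\alpha_n$ of $h^\alpha$, with value $\sum_{a\in I}\sum_{b\notin I}t^\alpha_at^\alpha_b$ on $e_I$; writing $t^\alpha_i=H_\alpha+s^\alpha_i$ with $\sum_i s^\alpha_i=0$ and $\sum_i(s^\alpha_i)^2=|\mathring B_\alpha|^2$, this value equals $k(n-k)H_\alpha^2+(n-2k)H_\alpha\sigma-\sigma^2$ with $\sigma=\sum_{a\in I}s^\alpha_a$, a concave function of $\sigma$ on $|\sigma|\le\sqrt{k(n-k)/n}\,|\mathring B_\alpha|$. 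Minimizing over $I$ (using $n-2k\ge0$) yields $q(R^B_\alpha)\ge k(n-k)\big(|H_\alpha|^2-\tfrac1n|\mathring B_\alpha|^2-\tfrac{(n-2k)|H_\alpha||\mathring B_\alpha|}{\sqrt{nk(n-k)}}\big)\mathrm{Id}$, and summing over $\alpha$ with $\sum_\alpha H_\alpha^2=|H|^2$, $\sum_\alpha|\mathring B_\alpha|^2=|\mathring B|^2$ and Cauchy--Schwarz $\sum_\alpha|H_\alpha||\mathring B_\alpha|\le|H||\mathring B|$ gives $\sum_\alpha q(R^B_\alpha)\ge k(n-k)\gamma_k\,\mathrm{Id}$ after taking the minimum over $M$. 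Combining everything, $\mu_k^{\mathrm{ex}}\ge k(n-k+1)(c+\gamma_k)$ and $\mu_{n-k}^{\mathrm{ex}}\ge(n-k)(k+1)(c+\gamma_k)\ge k(n-k+1)(c+\gamma_k)$ since $1\le k\le n/2$, which yields \eqref{eqn:1.10}; if $M$ is totally umbilical then $\mathring B\equiv0$, so $\gamma_k=\min_M|H|^2$ and the last assertion follows.

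The step I expect to be the main obstacle is the algebraic estimate above: pinning down the sharp constants $\tfrac1n$ and $\tfrac{n-2k}{\sqrt{nk(n-k)}}$ forces one to diagonalize each shape operator $h^\alpha$ separately (their adapted bases need not coincide), to optimize $\sum_{a\in I,b\notin I}t_at_b$ over $k$-element index sets via the concavity argument together with $|\sigma|\le\sqrt{k(n-k)/n}\,|\mathring B_\alpha|$, and then to recombine the normal directions by Cauchy--Schwarz with no loss. A secondary point to watch is aligning the single curvature hypothesis with the two form-degrees $k$ and $n-k$ produced by the Hodge-decomposition reduction, which is precisely what the Hodge-star conjugacy of $\mathrm{Ric}_M^{[k]}$ and $\mathrm{Ric}_M^{[n-k]}$ resolves.
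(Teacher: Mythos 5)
Your proposal is correct, but it is worth noting that the paper never proves Theorem~\ref{thm:1.4} directly: it quotes it from Cui--Sun and recovers it as the $p=2$, $f\equiv\mathrm{const}$, $N=n-k+1$ specialization of Theorem~\ref{thm:1.5} (cf.\ Remark~\ref{rem:1.3}). Measured against that argument, your route differs in two places. For the eigenvalue step, the paper integrates the Bochner formula against the eigenform with no Hodge splitting and uses both terms of the decomposition $|\nabla\omega|^2\geq\frac{1}{k+1}|d\omega|^2+\frac{1}{n-k+1}|\delta\omega|^2$ (Proposition~\ref{prop:2.4}), absorbing them with the single constant $\widetilde{C}=\max\{\tfrac{k}{k+1},\tfrac{n-k}{n-k+1}\}$; you instead split into exact and coexact parts, keep only the $\delta$-term of the decomposition on exact forms, and move degree $k+1$ to degree $n-k$ via Hodge duality. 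Both yield the constant $k(n-k+1)$ here; the paper's version is the one that survives for general $p$, while yours is closer to Gallot--Meyer and to Cui--Sun's original argument. For the curvature step, the paper works with the extended shape operator $S^{[k]}=\mathring{S}^{[k]}+kH$ and \emph{quotes} the key inequality $|\mathring{S}^{[k]}(\omega)|^2\leq\frac{k(n-k)}{n}|\mathring{B}|^2|\omega|^2$ from Cui--Sun and Raulot--Savo, whereas you rederive the pointwise bound $\mathrm{Ric}^{[k]}\geq k(n-k)(c+\gamma_k)$ from scratch by diagonalizing each $h^\alpha$, optimizing $\sum_{a\in I,\,b\notin I}t_at_b$ over $k$-subsets subject to $|\sigma|\leq\sqrt{k(n-k)/n}\,|\mathring{B}_\alpha|$, and recombining the normal directions by Cauchy--Schwarz; this is a legitimate self-contained proof of the quoted estimate and buys you independence from the reference. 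Two small points you should make explicit: the final comparison $(n-k)(k+1)(c+\gamma_k)\geq k(n-k+1)(c+\gamma_k)$ presupposes $c+\gamma_k\geq0$, so add the remark that the asserted bound is vacuous when $c+\gamma_k<0$ since $\lambda_{1,k}>0$; and the endpoint chosen in the concavity argument is the correct minimum precisely because $n-2k\geq0$, which is where the hypothesis $k\leq n/2$ enters your curvature estimate (it enters the paper's argument only through the value of $\widetilde{C}$).
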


Inspired by the above result, we obtain the following eigenvalue estimate for the weighted $p$-Laplacian on closed submanifolds of a metric measure space.

\begin{theorem}\label{thm:1.5}
Suppose $F:M\rightarrow\overline{M}$ is an isometric immersion from a closed $n$-dimensional Riemannian manifold into an $(n+m)$-dimensional
Riemannian manifold. Assume that $f$ is a smooth function on $\overline{M}$ and $F^*\overline{\rm Ric}_{N,f}^{[k]}\geq k(n-k)c$ for some constant $c$ and $1\leq k\leq\frac{n}{2}$. If $p\geq 2$, then the first nonzero eigenvalue $\lambda_{1,k,f}$ of the weighted $p$-Laplacian $\Delta_{p,f}$
acting on $k$-forms of $M$ satifies
\begin{equation}\label{1.14}
  \lambda_{1,k,f}\geq\bigg(\frac{k(n-k)(c+\widetilde{\gamma}_{k})}
  {2^{1-\frac{2}{p}}(\widetilde{C}+\frac{p-2}{2})}
  \bigg)^{\frac{p}{2}},
\end{equation}
where
\begin{equation*}
\begin{aligned}
\widetilde{C}&={\rm max}\Big\{\frac{k}{k+1},\frac{N-1}{N}\Big\},\qquad
\xi=\underset{M}{\rm max}\big|(\overline{\nabla}f)^{\bot}\big|,\\
\widetilde{\gamma}_{k}&=\min_{M}\Big\{|H|^2-\frac{1}{n}\big|\mathring{B}\big|^2
-\frac{(n-2k)|H|+\xi}{\sqrt{nk(n-k)}}\big|\mathring{B}\big|
-\frac{\xi}{n-k}|H|\Big\},
\end{aligned}
\end{equation*}
$H$ is the mean curvature vector and $\mathring{B}$ is the traceless part of the second fundamental form $B$. Moreover, if $M$ is totally umbilical, then
\begin{equation*}
  \lambda_{1,k,f}\geq\left(\frac{k(n-k)}
  {2^{1-\frac{2}{p}}(\widetilde{C}+\frac{p-2}{2})}
  \underset{M}\min\Big(c+|H|^2-\frac{\xi}{n-k}|H|\Big)\right)^{\frac{p}{2}}.
\end{equation*}
\end{theorem}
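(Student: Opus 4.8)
The plan is to reduce Theorem~\ref{thm:1.5} to Theorem~\ref{thm:1.3}. Endow $M$ with the induced weight $f\circ F$; then $(M,g,e^{-f\circ F}dv)$ is a closed metric measure space whose first nonzero weighted $p$-Laplace eigenvalue is exactly $\lambda_{1,k,f}$. Since Theorem~\ref{thm:1.3} applies to this space as soon as ${\rm Ric}_{N,f}^{[k]}\geq k(n-k)c'$ holds for some constant $c'$, it suffices to prove
\begin{equation*}
  {\rm Ric}_{N,f}^{[k]}\geq k(n-k)(c+\widetilde\gamma_k)\qquad\text{on }M;
\end{equation*}
inserting $c'=c+\widetilde\gamma_k$ into the estimate \eqref{1.9} of Theorem~\ref{thm:1.3} then yields \eqref{1.14}. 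In turn, this reduces to the pointwise comparison of Weitzenb\"ock curvature operators on $\Lambda^k T_x^*M$,
\begin{equation*}
  {\rm Ric}_{N,f}^{[k]}\big|_x\;\geq\;\big(F^*\overline{\rm Ric}_{N,f}^{[k]}\big)\big|_x+k(n-k)\,\widetilde\gamma_k(x)\,{\rm Id},\qquad x\in M,
\end{equation*}
where $\widetilde\gamma_k(x)$ denotes the bracketed expression in the definition of $\widetilde\gamma_k$ evaluated at $x$; combined with the hypothesis $F^*\overline{\rm Ric}_{N,f}^{[k]}\geq k(n-k)c$ and with $\widetilde\gamma_k(x)\geq\widetilde\gamma_k$, it gives the displayed bound.

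To prove the comparison one writes ${\rm Ric}_{N,f}^{[k]}$, according to its definition \eqref{def:2.8}, as the sum of the Riemannian Weitzenb\"ock operator ${\rm Ric}^{[k]}$, a term linear in the Hessian $\nabla^2(f\circ F)$, and an $N$-correction term that is algebraic in the $1$-form $d(f\circ F)$. Into the first two one substitutes the Gauss equation
\begin{equation*}
  R(X,Y,Z,W)=\overline R(X,Y,Z,W)+\langle B(X,W),B(Y,Z)\rangle-\langle B(X,Z),B(Y,W)\rangle
\end{equation*}
and the Hessian comparison $\nabla^2(f\circ F)(X,Y)=\overline\nabla^2 f(X,Y)+\langle(\overline\nabla f)^{\bot},B(X,Y)\rangle$. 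Evaluated in a local orthonormal frame, the ambient contributions reassemble into the pullbacks of the corresponding ambient operators (which together with the $N$-correction term make up $F^*\overline{\rm Ric}_{N,f}^{[k]}$), while the remaining part --- quadratic in $B$ and linear in $(\overline\nabla f)^{\bot}$ --- is, after the splitting $B=\mathring B+\tfrac1n g\otimes H$ and a Cauchy--Schwarz estimate, bounded below as a quadratic form by $k(n-k)\,\widetilde\gamma_k(x)$, with $\xi=\max_M|(\overline\nabla f)^{\bot}|$; for $f$ constant this is precisely the computation of Cui--Sun \cite{Cui-Sun} underlying Theorem~\ref{thm:1.4}, which produces $\gamma_k$. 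Finally, since $d(f\circ F)=F^*(df)$ as a $1$-form, the $N$-correction term restricts consistently and contributes no further error. Adding the three contributions gives the pointwise comparison.

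The totally umbilical case is the specialization $\mathring B\equiv 0$, for which $\widetilde\gamma_k=\min_M\big(|H|^2-\tfrac{\xi}{n-k}|H|\big)$, so that \eqref{1.14} becomes the asserted bound. I expect the main obstacle to be the second step: carrying the Gauss-equation and Hessian substitutions through the $k$-form Weitzenb\"ock operator while keeping track of the Bakry--\'{E}mery pieces $\nabla^2 f$ and $\tfrac{1}{N-n}\,d(f\circ F)\otimes d(f\circ F)$, and --- most delicately --- organizing the Cauchy--Schwarz estimate so that the $(n-2k)|H|$ and $\xi$ contributions combine under the common denominator $\sqrt{nk(n-k)}$ and the resulting constant is sharp, i.e.\ so that equality holds when $M$ is totally umbilical and, when $f$ is constant and $N=n-k+1$ (so that $\xi=0$ and $\widetilde\gamma_k=\gamma_k$), it recovers the Gallot--Meyer/Cui--Sun equality on the round sphere. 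That the reduction to Theorem~\ref{thm:1.3} is legitimate is clear, since the harmonic-form space $\mathcal{H}^k_f(M)$ and the weighted Sobolev space entering $\lambda_{1,k,f}$ are defined intrinsically on $(M,g,e^{-f\circ F}dv)$.
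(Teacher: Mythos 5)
Your proposal is correct and follows essentially the same route as the paper: the paper likewise reduces to the integral inequality from the proof of Theorem~\ref{thm:1.3} and then establishes the pointwise bound $\langle{\rm Ric}_{N,f}^{[k]}(\omega),\omega\rangle\geq k(n-k)(c+\widetilde{\gamma}_k)|\omega|^2$ via the Gauss equation, the Hessian comparison $\nabla^2f=\overline{\nabla}^2f+\langle\overline{\nabla}f,B(\cdot,\cdot)\rangle$, the splitting $S^{[k]}(\omega)=\mathring{S}^{[k]}(\omega)+kH\omega$, the Cui--Sun estimate $|\mathring{S}^{[k]}(\omega)|^2\leq\frac{k(n-k)}{n}|\mathring{B}|^2|\omega|^2$, and Cauchy--Schwarz. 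The only difference is packaging (you invoke Theorem~\ref{thm:1.3} as a black box with $c'=c+\widetilde{\gamma}_k$ rather than re-deriving its intermediate inequality), plus a harmless notational slip in writing the trace decomposition of $B$, which should read $B=\mathring{B}+H\otimes g$ with $H=\frac1n\operatorname{tr}B$.
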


When $N=n-k+1$, $f$ is a constant, we immediately obtain the following corollary:

\begin{corollary}\label{Cor:1.3}
Suppose $F:M\rightarrow\overline{M}$ is an isometric immersion from a closed $n$-dimensional Riemannian manifold into an $(n+m)$-dimensional Riemannian manifold $\overline{M}$ with the pull back Weitzenb\"{o}ck curvature operator $F^*{\overline{\rm Ric}}^{[k]} \geq k(n-k)c$  for some constant $c$ and $1\leq k\leq\frac{n}{2}$. If $p\geq 2$, then the first nonzero eigenvalue $\lambda_{1,k}$ of the $p$-Laplacian $\Delta_{p}$
acting on $k$-forms of $M$ satifies
\begin{equation}\label{1.15}
  \lambda_{1,k}\geq\bigg(\frac{k(n-k)(c+\gamma_{k})}
  {2^{1-\frac{2}{p}}\big(\frac{n-k}{n-k+1}+\frac{p-2}{2}\big)}
  \bigg)^{\frac{p}{2}},
\end{equation}
where
\begin{equation*}
\gamma_{k}=\min_{M}\Big\{|H|^2-\frac{1}{n}\big|\mathring{B}\big|^2
-\frac{(n-2k)|H|}{\sqrt{nk(n-k)}}\big|\mathring{B}\big|\Big\},
\end{equation*}
 $H$ is the mean curvature vector and $\mathring B$ is the traceless part of the second fundamental form $B$. Moreover, if $M$ is totally umbilical, then
 \begin{equation*}
  \lambda_{1,k}\geq\bigg(\frac{k(n-k)}
  {2^{1-\frac{2}{p}}\big(\frac{n-k}{n-k+1}+\frac{p-2}{2}\big)}
  \left(c+\underset{M}\min|H|^2\right)\bigg)^{\frac{p}{2}}.
\end{equation*}
\end{corollary}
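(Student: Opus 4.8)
The plan is to obtain Corollary~\ref{Cor:1.3} as a direct specialization of Theorem~\ref{thm:1.5}: I would set $N=n-k+1$, take $f$ to be constant, and then verify that the three quantities governing the bound \eqref{1.14} collapse onto those in \eqref{1.15}. Once these reductions are established, both \eqref{1.15} and its totally umbilical refinement follow by substitution, with no further analysis required.

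First I would eliminate the weighting. Since $f$ is constant, $\overline{\nabla}f\equiv 0$, so its normal component vanishes identically and $\xi=\max_M|(\overline{\nabla}f)^{\bot}|=0$. Hence $\delta_f=\delta$ and the weighted $p$-Laplacian $\Delta_{p,f}$ coincides with the $p$-Laplacian $\Delta_p$ (as recorded just after its definition), while the weighted Weitzenb\"{o}ck curvature $\overline{\rm Ric}_{N,f}^{[k]}$ reduces to the ordinary $\overline{\rm Ric}^{[k]}$ for every value of $N$, since all terms involving $f$ drop out. Consequently the hypothesis $F^*\overline{\rm Ric}_{N,f}^{[k]}\geq k(n-k)c$ of Theorem~\ref{thm:1.5} becomes exactly the hypothesis $F^*\overline{\rm Ric}^{[k]}\geq k(n-k)c$ of the corollary.

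Next I would evaluate the constant and the curvature defect. Putting $N=n-k+1$ gives $\frac{N-1}{N}=\frac{n-k}{n-k+1}$. Since $x\mapsto\frac{x}{x+1}$ is increasing and the assumption $1\leq k\leq\frac{n}{2}$ forces $k\leq n-k$, we obtain $\frac{k}{k+1}\leq\frac{n-k}{n-k+1}$, so $\widetilde{C}=\frac{n-k}{n-k+1}$, which matches the denominator in \eqref{1.15}. Setting $\xi=0$ in the definition of $\widetilde{\gamma}_k$ deletes both the $\xi$-contribution to the coefficient of $|\mathring{B}|$ and the final term $\frac{\xi}{n-k}|H|$, leaving exactly $\gamma_k=\min_M\{|H|^2-\frac{1}{n}|\mathring{B}|^2-\frac{(n-2k)|H|}{\sqrt{nk(n-k)}}|\mathring{B}|\}$. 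Substituting these three reductions into \eqref{1.14} yields \eqref{1.15}.

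For the totally umbilical case I would invoke the corresponding refinement in Theorem~\ref{thm:1.5}: with $\xi=0$ and $\widetilde{C}=\frac{n-k}{n-k+1}$ its bound becomes $\big(\frac{k(n-k)}{2^{1-2/p}(\frac{n-k}{n-k+1}+\frac{p-2}{2})}\min_M(c+|H|^2)\big)^{p/2}$, and since $c$ is constant we have $\min_M(c+|H|^2)=c+\min_M|H|^2$, which is precisely the claimed estimate. I do not anticipate any genuine obstacle, as all the analytic content is already contained in Theorem~\ref{thm:1.5}; the only point demanding care is the monotonicity argument identifying $\widetilde{C}=\frac{n-k}{n-k+1}$, which uses $k\leq\frac{n}{2}$ in an essential way (for $k>\frac{n}{2}$ the maximum defining $\widetilde{C}$ would be $\frac{k}{k+1}$ instead).
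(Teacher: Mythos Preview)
Your proposal is correct and follows exactly the approach the paper takes: the paper simply prefaces the corollary with ``When $N=n-k+1$, $f$ is a constant, we immediately obtain the following corollary'' and gives no further argument. Your write-up merely spells out in detail the same specialization (including the identification of $\widetilde{C}$ via $k\le n/2$), so nothing needs to change.
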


\begin{remark}\label{rem:1.3}
When $p=2$, the above conclusion \eqref{1.15} covers the estimate \eqref{eqn:1.10} in Theorem \ref{thm:1.4}. Furthermore, if
 codimension $m=1$, Savo \cite{Savo} obtained Corollary \ref{Cor:1.3}.
\end{remark}

\section{Preliminaries}\label{sect:2}

In this section, we explore properties related to the weighted Hodge Laplacian acting on differential forms which are needed in the proofs of our results.

Let $(M,g)$ be an $n$-dimensional oriented Riemannian manifold, and $dv$ is the Riemannian volume element associated to the Riemannian metric $g$.
Let $\Omega^k(M)$ be the space of smooth differential forms of degree $k$ on $M$.
Denote by $\{\theta^1,\cdots,\theta^{n}\}$ the dual coframe of an orthonormal frame $\{e_1,\cdots,e_{n}\}$ on $M$.
For any $k$-form $\omega$, the Weitzenb\"{o}ck curvature operator (or the Bochner curvature operator)
 ${\rm Ric}^{[k]}:\Omega^k(M)\rightarrow\Omega^k(M)$ is given by
\begin{equation}\label{w2.1}
  {\rm Ric}^{[k]}(\omega)=\sum_{i,j=1}^{n}\theta^j\wedge\iota_{e_i}R(e_i, e_j)\omega,
\end{equation}
where $R$ denotes the Riemann curvature operator. When $k=1$,  ${\rm Ric}^{[1]}$ is just the Ricci tensor.
Then we have the following two formulas:
\begin{equation}\label{2.1}
\Delta\omega
=\nabla^*\nabla\omega+{\rm Ric}^{[k]}(\omega),
\end{equation}
\begin{equation}\label{2.2}
\frac{1}{2}\Delta|\omega|^2
=\langle\Delta\omega,\omega\rangle-|\nabla\omega|^2-\langle {\rm Ric}^{[k]}(\omega),\omega\rangle,
\end{equation}
where $\nabla$ is the Levi-Civita connection and
$\nabla^*\nabla$ is the connection Laplacian on $M$.
Equalities \eqref{2.1} and \eqref{2.2} are usually called Weitzenb\"{o}ck formula and Bochner formula.
By the work \cite{Gallot-Meyer1} of Gallot and Meyer, if the eigenvalues of the Riemann curvature operator $R$ are bounded from below by $c\in\mathbb{R}$, then
\begin{equation}\label{eqn:2.3}
 \langle{\rm Ric}^{[k]}(\omega),\omega\rangle \geq k(n-k)c|\omega|^2.
\end{equation}


Let $f$ be a smooth real-valued function on $M$. When the measure is changed from being $dv$ to $d\mu=e^{-f}dv$,
the triple $(M, g,d\mu)$ is called a metric measure space (a weighted manifold or a Bakry-\'{E}mery manifold).
Define
\begin{equation}\label{def:2.5}
\delta_{f}:=\delta+\iota_{\nabla f},
\qquad
\nabla^{*}_f:=\nabla^{*}+\iota_{\nabla f}.
\end{equation}
By the Stokes formula, we immediately obtain the following formula:

\begin{proposition}\label{prop:2.1}
Let $(M,g,e^{-f}dv)$ be a closed metric measure space. For any two differential forms $\omega\in\Omega^k(M)$ and $\phi\in\Omega^{k+1}(M)$, there holds
\begin{equation}\label{eqn:2.1}
\int_M\langle d\omega,\phi\rangle  d\mu=\int_M\langle\omega,\delta_f\phi \rangle d\mu,
\end{equation}
where $d\mu=e^{-f}dv.$
\end{proposition}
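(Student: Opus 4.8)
The plan is to reduce the weighted identity to the classical Green formula $\int_M\langle d\omega,\phi\rangle\,dv=\int_M\langle\omega,\delta\phi\rangle\,dv$ on the underlying Riemannian manifold $(M,g)$ (valid since $M$ is closed, with no boundary terms), by absorbing the weight $e^{-f}$ into the test form. Since $e^{-f}$ is a positive scalar function, one has the pointwise identity $\langle d\omega,\phi\rangle e^{-f}=\langle d\omega,e^{-f}\phi\rangle$, hence
\begin{equation*}
\int_M\langle d\omega,\phi\rangle\,d\mu=\int_M\langle d\omega,e^{-f}\phi\rangle\,dv=\int_M\langle\omega,\delta(e^{-f}\phi)\rangle\,dv,
\end{equation*}
where in the last step we apply the classical Green formula to $\omega\in\Omega^k(M)$ and $e^{-f}\phi\in\Omega^{k+1}(M)$.

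The key algebraic input is then the Leibniz rule for the codifferential against multiplication by a function: for any smooth $h$ and any $\psi\in\Omega^{k+1}(M)$ one has $\delta(h\psi)=h\,\delta\psi-\iota_{\nabla h}\psi$. This follows by dualizing $d(h\alpha)=dh\wedge\alpha+h\,d\alpha$ through the Hodge star, using that $\iota_{\nabla h}$ is, up to sign, the star-conjugate of $dh\wedge\cdot$; alternatively one checks it directly in a local orthonormal coframe. Applying this with $h=e^{-f}$, so that $\nabla(e^{-f})=-e^{-f}\nabla f$, gives
\begin{equation*}
\delta(e^{-f}\phi)=e^{-f}\delta\phi-\iota_{-e^{-f}\nabla f}\phi=e^{-f}\big(\delta\phi+\iota_{\nabla f}\phi\big)=e^{-f}\,\delta_f\phi,
\end{equation*}
by the definition \eqref{def:2.5} of $\delta_f$.

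Substituting this back yields
\begin{equation*}
\int_M\langle d\omega,\phi\rangle\,d\mu=\int_M\langle\omega,e^{-f}\delta_f\phi\rangle\,dv=\int_M\langle\omega,\delta_f\phi\rangle\,d\mu,
\end{equation*}
which is the asserted formula. There is no serious obstacle here: the statement is essentially a one-line consequence of the unweighted Stokes theorem once the product rule $\delta(h\psi)=h\,\delta\psi-\iota_{\nabla h}\psi$ is available, so the only point requiring care is the sign bookkeeping in that identity together with the sign coming from $\nabla(e^{-f})=-e^{-f}\nabla f$; getting these right is precisely what turns $\delta$ into the Witten-type deformation $\delta_f=\delta+\iota_{\nabla f}$ and confirms that $\delta_f$ is the correct adjoint of $d$ with respect to $d\mu$.
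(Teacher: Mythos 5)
Your proof is correct and is exactly the standard argument the paper is gesturing at: the paper simply asserts the identity "by the Stokes formula" without writing out the details, and your computation (absorb $e^{-f}$ into the test form, apply the unweighted Green formula, and use $\delta(e^{-f}\phi)=e^{-f}\delta_f\phi$) supplies those details with the signs handled correctly. No issues.
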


By a straightforward computation, we derive the following weighted Weitzenb\"{o}ck formula \cite{Petersen-Wink} and weighted Bochner formula
\cite{Dung-Sung}:
\begin{equation}\label{eqn:2.2}
  \Delta_f\omega
 =\nabla_f^*\nabla\omega+{\rm Ric}_f^{[k]}(\omega),
\end{equation}
\begin{equation}\label{eqn:2.5}
  \frac{1}{2}\Delta_f |\omega|^2
  =\langle\Delta_f\omega, \omega\rangle-|\nabla\omega|^2
  -\langle{\rm Ric}_f^{[k]}(\omega), \omega\rangle,
\end{equation}
where ${\rm Ric}_f^{[k]}$ is called the weighted Weitzenb\"{o}ck curvature
 operator defined by
\begin{equation*}
  {\rm Ric}_f^{[k]}(\omega)={\rm Ric}^{[k]}(\omega)+\nabla^2{f}(\omega).
\end{equation*}
Here $\nabla^2{f}$ is the Hessian acting on forms \cite{Escobar-Freire} given by
 \begin{equation}\label{hess}
\nabla^2{f}(\omega)
:=\sum_{i,j=1}^{n}\nabla^2{f}(e_i,e_j)\theta^j
\wedge\iota_{e_i}\omega.
\end{equation}
For $k=1$, the weighted Weitzenb\"{o}ck curvature operator
 is just ${\rm Ric}^{[1]}_f={\rm Ric}+\nabla^2f$
which is the Bakry-\'{E}mery Ricci tensor (or the $\infty$-Bakry-\'{E}mery Ricci tensor) which is introduced by Bakry-\'{E}mery \cite{Bakry-Emery} in the study of diffusion processes (see also \cite{Bakry-Gentil-Ledoux} for a comprehensive introduction), and then it has been extensively investigated in the theory of the Ricci flow.
Also, for any constant $N\geq n-k+1$, the $N$-dimensional Bakry-\'{E}mery Ricci operator
${\rm Ric}_{N,f}^{[k]}:\Omega^k(M)\rightarrow\Omega^k(M)$ (introduced in
\cite{Branding-Habib}) is defined by
\begin{equation}\label{def:2.8}
{\rm Ric}_{N,f}^{[k]}(\omega):={\rm Ric}_{f}^{[k]}(\omega)-\frac{1}{N-(n-k+1)}df\wedge
\iota_{\nabla f}\omega,
\end{equation}
and when $N=n-k+1$,  ${\rm Ric}_{n-k+1,f}^{[k]}$ is defined if and only if $f$ is constant. For $k=1$, ${\rm Ric}_{N,f}^{[1]}$
 corresponds to the classical $N$-dimensional Bakry-\'{E}mery Ricci tensor.

To prove Theorem \ref{thm:1.3} and Theorem \ref{thm:1.5}, we need the following estimate:
\begin{proposition}\label{prop:2.4}
Let $\omega\in\Omega^k(M)$.
Then, for any constant $N>n-k+1$, we have
\begin{equation}\label{eqn:2.6}
  |\nabla\omega|^2
  \geq\frac{1}{k+1}|d\omega|^2+\frac{1}{N}|\delta_f\omega|^2
  -\frac{1}{N-(n-k+1)}|\iota_{\nabla f}\omega|^2.
\end{equation}
\end{proposition}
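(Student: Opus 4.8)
The plan is to start from the classical refined Kato inequality, which for a $k$-form $\omega$ on an $n$-manifold reads
\begin{equation*}
  |\nabla\omega|^2 \ge \frac{1}{k+1}|d\omega|^2 + \frac{1}{n-k+1}|\delta\omega|^2,
\end{equation*}
but instead of quoting it as a black box I would reprove the sharp pointwise estimate by decomposing $\nabla\omega \in T^*M \otimes \Omega^k$ into its three irreducible components under the $\mathrm{O}(n)$-action: the totally antisymmetric part (which measures $d\omega$), the ``divergence'' part (which measures $\delta\omega$), and the remaining trace-free mixed-symmetry part. Writing $|\nabla\omega|^2$ as the sum of the squared norms of these three pieces, and using that the projections onto the first two pieces contribute exactly $\tfrac{1}{k+1}|d\omega|^2$ and $\tfrac{1}{n-k}|\delta\omega|^2$ respectively (with the dimension constants coming from the combinatorics of antisymmetrization), one gets the unweighted inequality with constant $\tfrac{1}{n-k}$ in the second slot; the slightly worse classical constant $\tfrac1{n-k+1}$ comes from not discarding the mixed part entirely. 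For our purposes I want to keep track carefully of where $\delta$ sits so that I can substitute $\delta_f = \delta + \iota_{\nabla f}$.

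The key algebraic step is then to handle the cross term. Since $\delta_f\omega = \delta\omega + \iota_{\nabla f}\omega$, we have
\begin{equation*}
  |\delta\omega|^2 = |\delta_f\omega - \iota_{\nabla f}\omega|^2 = |\delta_f\omega|^2 - 2\langle \delta_f\omega, \iota_{\nabla f}\omega\rangle + |\iota_{\nabla f}\omega|^2.
\end{equation*}
Feeding this into the Kato-type bound with the good constant $\tfrac{1}{n-k}$ on $|\delta\omega|^2$ would produce an indefinite cross term $-\tfrac{2}{n-k}\langle\delta_f\omega,\iota_{\nabla f}\omega\rangle$, which I would absorb by a weighted Cauchy–Schwarz (Peter–Paul) inequality: for any $t>0$,
\begin{equation*}
  2\langle \delta_f\omega,\iota_{\nabla f}\omega\rangle \le t\,|\delta_f\omega|^2 + \frac{1}{t}|\iota_{\nabla f}\omega|^2.
\end{equation*}
Optimizing $t$ (the natural choice being dictated by the requirement that the coefficient of $|\delta_f\omega|^2$ comes out to $\tfrac1N$) converts the constant $\tfrac{1}{n-k}$ into $\tfrac1N$ at the cost of a negative multiple of $|\iota_{\nabla f}\omega|^2$; a short computation shows the precise multiple is $\tfrac{1}{N-(n-k+1)}$, exactly the denominator appearing in the definition \eqref{def:2.8} of ${\rm Ric}_{N,f}^{[k]}$, which is the reason that constant was chosen. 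The $|d\omega|^2$ term is untouched by all of this, so its coefficient stays $\tfrac{1}{k+1}$.

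The main obstacle, and the step deserving the most care, is establishing the sharp pointwise Kato-type inequality with the correct constants $\tfrac{1}{k+1}$ and $\tfrac{1}{n-k}$ — i.e. getting the representation-theoretic bookkeeping of the three components of $\nabla\omega$ right, since an off-by-one in a dimensional constant propagates into the final eigenvalue bound. One clean way to do this rigorously is to fix a point, choose an orthonormal coframe, write $\nabla\omega = \sum_i \theta^i \otimes \nabla_{e_i}\omega$, and compute $|d\omega|^2 = \sum |\theta^i\wedge\nabla_{e_i}\omega|^2 + (\text{cross terms})$ and $|\delta_f\omega|^2$ similarly, then apply Cauchy–Schwarz slot-by-slot; alternatively one can cite the refined Kato inequality of Calderbank–Gauduchon–Herzlich and adapt it. Once the pointwise inequality is in hand, the weighted version \eqref{eqn:2.6} follows purely from the elementary substitution and Peter–Paul argument above, with no integration needed since the statement is pointwise.
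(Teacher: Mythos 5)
Your overall route --- decompose $\nabla\omega$ into its irreducible components, discard the trace-free mixed (twistor) part to get a pointwise Kato-type inequality, then substitute $\delta=\delta_f-\iota_{\nabla f}$ and absorb the cross term by a weighted Cauchy--Schwarz --- is exactly the paper's proof. However, there is a concrete error in the constant you assign to the codifferential slot. The orthogonal projection of $\nabla\omega$ onto the $\Lambda^{k-1}$-isotypic component of $T^*M\otimes\Lambda^kT^*M$ has squared norm $\frac{1}{n-k+1}|\delta\omega|^2$, not $\frac{1}{n-k}|\delta\omega|^2$: if $L(\alpha\otimes\eta)=\iota_{\alpha^{\sharp}}\eta$, then $LL^{*}=(n-k+1)\,\mathrm{id}$ on $\Lambda^{k-1}$, so dropping the other components yields precisely the classical constant $\frac{1}{n-k+1}$, and there is no improved constant $\frac{1}{n-k}$ to be had. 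A quick sanity check: for $k=1$ and $\omega=du$ your version would assert $|\nabla^2u|^2\geq\frac{1}{n-1}(\Delta u)^2$, which fails whenever $\nabla^2u=\frac{\Delta u}{n}g$.

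This is not a harmless off-by-one, because your plan explicitly feeds the expansion of $|\delta\omega|^2$ into the bound with coefficient $\frac{1}{n-k}$: if you then tune the Peter--Paul parameter so that the coefficient of $|\delta_f\omega|^2$ equals $\frac1N$, the penalty term comes out as $\frac{1}{N-(n-k)}|\iota_{\nabla f}\omega|^2$ rather than the claimed $\frac{1}{N-(n-k+1)}|\iota_{\nabla f}\omega|^2$, so the proposal is internally inconsistent as written. The repair is immediate and brings you back to the paper's argument: start from $|\nabla\omega|^2\geq\frac{1}{k+1}|d\omega|^2+\frac{1}{n-k+1}|\delta\omega|^2$, estimate $|\delta_f\omega-\iota_{\nabla f}\omega|^2\geq(a-b)^2$ with $a=|\delta_f\omega|$, $b=|\iota_{\nabla f}\omega|$, and apply the elementary inequality $\frac{(a-b)^2}{n-k+1}\geq\frac{a^2}{N}-\frac{b^2}{N-(n-k+1)}$ for $N>n-k+1$ (weighted Cauchy--Schwarz with weights $n-k+1$ and $N-(n-k+1)$), which gives \eqref{eqn:2.6} exactly.
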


\begin{proof}
For any smooth tangent vector field $X$ on $M$, we have the following orthogonal decomposition \cite{Gallot-Meyer}:
\begin{equation}\label{2.12}
  \nabla_{X}\omega
  =\frac{1}{k+1}\iota_{X}d\omega-\frac{1}{n-k+1}X^{\flat}\wedge\delta\omega
+T\omega(X),
\end{equation}
where $X^{\flat}$ is the dual $1$-form defined by $X^{\flat}(e_i)=g(X,e_i)$, and $T$ is the twistor operator on $M$.
Then
\begin{equation*}
\begin{aligned}
  |\nabla\omega|^2
&=\frac{1}{k+1}|d\omega|^2+\frac{1}{n-k+1}|\delta\omega|^2
+|T\omega|^2\\
&\geq\frac{1}{k+1}|d\omega|^2+\frac{1}{n-k+1}|\delta\omega|^2\\
&=\frac{1}{k+1}|d\omega|^2+\frac{1}{n-k+1}|\delta_{f}\omega-\iota_{\nabla f}\omega|^2\\
&\geq\frac{1}{k+1}|d\omega|^2+\frac{1}{n-k+1}
\big(|\delta_{f}\omega|-|\iota_{\nabla f}\omega|\big)^2\\
&\geq\frac{1}{k+1}|d\omega|^2+\frac{1}{N}|\delta_f\omega|^2
  -\frac{1}{N-(n-k+1)}|\iota_{\nabla f}\omega|^2,
 \end{aligned}
\end{equation*}
where we used the inequality
$$\frac{(a-b)^2}{n-k+1}\geq\frac{1}{N}a^2-\frac{1}{N-(n-k+1)}b^2$$
for all $N>n-k+1$.
\end{proof}

\section{Variational characterization of the eigenvalue}\label{sect:3}

In this section, we will compute the Euler-Lagrange equation of \eqref{eqn:1.6} and show that
the infimum can be characterized as the eigenvalue problem \eqref{eqn:1.8}.

Let $(M, g, d\mu=e^{-f}dv)$ be an $n$-dimensional closed metric measure space.
For $0\leq k\leq n$ and $p\geq 2$, we denote by $L^p_{\mu}\big(\Omega^k(M)\big)$ the space of measurable $k$-forms on $M$
satisfying
$\int_M |\omega|^pd\mu<\infty.$
Then, we use $\mathcal{W}^{1,p}_{\mu}\big(\Omega^k(M)\big)$ to denote
the weighted Sobolev space \cite{Schwarz} of differential $k$-forms in $L^p_{\mu}\big(\Omega^k(M)\big)$
with generalized gradient \cite{Scott} in $L^p_{\mu}\big(\Omega^k(M)\big)$.

\begin{definition}
We call that $\lambda$ is an eigenvalue of the weighted $p$-Laplacian
$\Delta_{p,f}$, if there exists a $k$-form $\omega\in {W}^{1,p}_{\mu}(\Omega^{k}({M}))$ such that
\begin{equation*}
  \Delta_{p,f}\omega=\lambda|\omega|^{p-2}\omega
\end{equation*}
 in distribution sense.
 Namely, for any $\phi\in\Omega^{k}(M)$,
\begin{equation*}
  \int_{M}|d\omega|^{p-2}\langle d\omega,d\phi\rangle d\mu
+\int_{M}|\delta_{f}\omega|^{p-2}\langle\delta_{f}\omega,\delta_{f}\phi\rangle
 d\mu=\lambda\int_{M}|\omega|^{p-2}\langle\omega,\phi\rangle d\mu.
\end{equation*}
\end{definition}

First, we show that the first nonzero eigenvalue $\lambda_{1,k,f}$ can be characterized as the infinum of the weighted $L^p$-Dirchlet integral \eqref{eqn:1.6} over the space $A^{k}_{\mu}(M)$.
\begin{proposition}\label{prop:3.1}
For a closed metric measure space $(M, g, e^{-f}dv)$ and $p\geq 2$,
the first nonzero eigenvalue $\lambda_{1,k,f}$ of the weighted $p$-Laplacian $\Delta_{p,f}$ satifies
\begin{equation*}
 \lambda_{1,k,f}
=\inf\Big\{\int_{M}\big(|d\omega|^{p}+|\delta_{f}\omega|^p\big)d\mu
~ \Big |~\omega\in A_{\mu}^k(M)\Big\}.
\end{equation*}
\end{proposition}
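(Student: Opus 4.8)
The plan is to establish the two inequalities $\lambda_{1,k,f} \geq \inf_{A^k_\mu}\mathcal{F}_f$ and $\lambda_{1,k,f} \leq \inf_{A^k_\mu}\mathcal{F}_f$ separately, following the standard variational argument adapted to the weighted and form-valued setting. First I would show $\lambda_{1,k,f} \geq \inf_{A^k_\mu}\mathcal{F}_f$. If $\omega$ is an eigenform for the first nonzero eigenvalue $\lambda_{1,k,f}$, I claim $\omega$ can be normalized and arranged to lie in $A^k_\mu(M)$. Indeed, testing the eigenvalue equation against any $\varphi \in \mathcal{H}^k_f(M)$ gives $d\varphi = 0$ and $\delta_f\varphi = 0$, so the left-hand side vanishes and we get $\lambda_{1,k,f}\int_M |\omega|^{p-2}\langle\omega,\varphi\rangle\,d\mu = 0$; since $\lambda_{1,k,f}\neq 0$, the orthogonality condition $\int_M |\omega|^{p-2}\langle\omega,\varphi\rangle\,d\mu = 0$ holds automatically. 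After scaling so that $\int_M |\omega|^p\,d\mu = 1$, we have $\omega \in A^k_\mu(M)$, and testing the eigenvalue equation against $\phi = \omega$ itself yields $\mathcal{F}_f[\omega] = \int_M(|d\omega|^p + |\delta_f\omega|^p)\,d\mu = \lambda_{1,k,f}$. Hence $\inf_{A^k_\mu}\mathcal{F}_f \leq \lambda_{1,k,f}$.

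Next I would prove the reverse inequality $\lambda_{1,k,f} \leq \inf_{A^k_\mu}\mathcal{F}_f$, which amounts to showing that a minimizer of $\mathcal{F}_f$ over $A^k_\mu(M)$ exists and is a weak eigenform with eigenvalue equal to the infimum. Let $\Lambda := \inf\{\mathcal{F}_f[\omega] : \omega \in A^k_\mu(M)\}$ and take a minimizing sequence $\omega_j \in A^k_\mu(M)$. The normalization $\int_M |\omega_j|^p\,d\mu = 1$ together with $\mathcal{F}_f[\omega_j] \to \Lambda$ gives a uniform bound on $\|\omega_j\|_{\mathcal{W}^{1,p}_\mu}$ (using that $|\delta_f\omega_j|^p$ and $|d\omega_j|^p$ control the full covariant derivative up to the harmonic part, via an argument in the spirit of Proposition~\ref{prop:2.4} combined with a Gaffney-type or Poincaré-type inequality on the orthogonal complement of $\mathcal{H}^k_f(M)$). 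By reflexivity of $\mathcal{W}^{1,p}_\mu$ for $p \geq 2$ and the Rellich-type compact embedding into $L^p_\mu$, pass to a subsequence with $\omega_j \rightharpoonup \omega$ weakly in $\mathcal{W}^{1,p}_\mu$ and $\omega_j \to \omega$ strongly in $L^p_\mu$. Strong $L^p_\mu$ convergence preserves the constraints defining $A^k_\mu(M)$, so $\omega \in A^k_\mu(M)$; weak lower semicontinuity of the convex functional $\mathcal{F}_f$ gives $\mathcal{F}_f[\omega] \leq \liminf_j \mathcal{F}_f[\omega_j] = \Lambda$, hence $\mathcal{F}_f[\omega] = \Lambda$ and $\omega$ is a minimizer.

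Then I would carry out the Lagrange-multiplier computation: for the minimizer $\omega$ and an arbitrary test form $\phi \in \Omega^k(M)$, consider the variation $\omega_t := \omega + t\phi$ (projected appropriately to handle the constraints). Differentiating $\mathcal{F}_f[\omega_t]$ at $t = 0$ using $\tfrac{d}{dt}|_{t=0}|d\omega_t|^p = p|d\omega|^{p-2}\langle d\omega, d\phi\rangle$ and the analogous identity for $\delta_f$, and differentiating the constraint $\int_M |\omega_t|^p\,d\mu = 1$, one obtains by the method of Lagrange multipliers a constant $\Lambda$ (absorbing the factor $p$) such that
\begin{equation*}
  \int_M |d\omega|^{p-2}\langle d\omega, d\phi\rangle\,d\mu + \int_M |\delta_f\omega|^{p-2}\langle \delta_f\omega, \delta_f\phi\rangle\,d\mu = \Lambda \int_M |\omega|^{p-2}\langle \omega, \phi\rangle\,d\mu
\end{equation*}
for all $\phi$ orthogonal (in the weighted $L^p$-duality pairing) to $\mathcal{H}^k_f(M)$; a short argument shows the identity extends to all $\phi \in \Omega^k(M)$ because for $\phi \in \mathcal{H}^k_f(M)$ both sides vanish. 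Taking $\phi = \omega$ recovers $\Lambda = \mathcal{F}_f[\omega]$, so $\omega$ is a weak eigenform with eigenvalue $\Lambda$, which is nonzero since $\omega \notin \mathcal{H}^k_f(M)$ (as $\mathcal{F}_f[\omega] = \Lambda > 0$ unless $\Lambda = 0$, the degenerate case being excluded). Therefore $\lambda_{1,k,f} \leq \Lambda$, completing the proof.

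The main obstacle I anticipate is the compactness/regularity input in the second inequality: one must justify that the minimizing sequence is bounded in $\mathcal{W}^{1,p}_\mu$, which requires a weighted Gaffney-type inequality ensuring that $\|d\omega\|_{L^p_\mu} + \|\delta_f\omega\|_{L^p_\mu} + \|\omega\|_{L^p_\mu}$ dominates $\|\omega\|_{\mathcal{W}^{1,p}_\mu}$, at least modulo the finite-dimensional space $\mathcal{H}^k_f(M)$; and one must invoke a suitable Rellich–Kondrachov compact embedding on the closed weighted manifold. For $p = 2$ this is classical Hodge theory with the weighted codifferential, but for general $p \geq 2$ the nonlinearity forces one to work with the $L^p$-Sobolev framework of Scott~\cite{Scott} and Schwarz~\cite{Schwarz}; citing these and using convexity of $\mathcal{F}_f$ (which guarantees weak lower semicontinuity and uniqueness of the critical value structure) should suffice. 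A secondary technical point is handling the nonlinear constraint $\int_M |\omega|^{p-2}\langle\omega,\varphi\rangle\,d\mu = 0$ when taking variations, but since $\mathcal{H}^k_f(M)$ is finite-dimensional and lies in the kernel of both $d$ and $\delta_f$, the constraint is automatically respected to first order by the critical equation, so no genuine difficulty arises there.
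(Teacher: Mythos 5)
Your core step---the first-variation/Lagrange-multiplier computation showing that a minimizer of $\mathcal{F}_f$ over $A^k_{\mu}(M)$ is a weak eigenform whose eigenvalue equals the infimum---is exactly the paper's argument: the paper introduces a second multiplier $\tau$ for the orthogonality constraint and kills it by choosing the variation direction to be a weighted harmonic form $\varphi\in\mathcal{H}^k_f(M)$, which is the same mechanism as your ``short argument'' extending the weak identity to all test forms. Where you go beyond the paper is in the other two pieces of the equality. First, the converse inequality: you test the eigenvalue equation for $\lambda_{1,k,f}$ against $\varphi\in\mathcal{H}^k_f(M)$ to see that any eigenform for a nonzero eigenvalue automatically satisfies the orthogonality constraint, and against $\omega$ itself to get $\mathcal{F}_f[\omega]=\lambda_{1,k,f}$ after normalization; this shows the infimum is $\leq$ every nonzero eigenvalue, a direction the paper does not address at all. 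Second, the existence of a minimizer via the direct method; the paper's proof opens with ``Assume that $\omega\in A_{\mu}^{k}(M)$ minimizes the functional'' and never returns to this point. So your write-up is logically more complete, and both additions are correct (for the nondegeneracy $\Lambda>0$ one can argue more directly than you do: if $\Lambda=0$ then $\omega\in\mathcal{H}^k_f(M)$, and the orthogonality constraint applied with $\varphi=\omega$ forces $\int_M|\omega|^p\,d\mu=0$, contradicting the normalization). The one place where your argument is not yet a proof is the same place the paper leaves blank: the compactness input, namely a weighted Gaffney-type inequality bounding $\|\omega\|_{\mathcal{W}^{1,p}_{\mu}}$ by $\|d\omega\|_{L^p_{\mu}}+\|\delta_f\omega\|_{L^p_{\mu}}+\|\omega\|_{L^p_{\mu}}$ together with a Rellich-type compact embedding for $p\geq 2$. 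You correctly flag this as the main obstacle but do not supply it; neither does the paper, so relative to the paper's own standard of rigor your proposal is at least as complete.
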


\begin{proof}
 Assume that $\omega\in A_{\mu}^{k}(M)$ minimizes
 the functional \eqref{eqn:1.6} over the space $A^{k}_{\mu}(M)$.
 Let $\phi(t)\in A_{\mu}^{k}(M)$ be a variation of $\omega$ such that $\phi(0)=\omega.$
 By the method of Lagrange multipliers, we set
 \begin{equation*}
 \mathcal{L}(t,\lambda,\tau)=\mathcal{F}_{f}[\phi(t)]
 +\lambda\Big(1-\int_{M}|\phi(t)|^p d\mu\Big)+\tau\Big(\int_{M}|\phi(t)|^{p-2}\langle\phi(t),\varphi \rangle d\mu\Big),
 \end{equation*}
where $\lambda,\tau$ are two Lagrange multipliers,  $\varphi\in\mathcal{H}^k_{f}(M)$ is any weighted harmonic $k$-form.
Then, we have
\begin{equation*}
 \begin{aligned}
 0&=\frac{\partial\mathcal{L}}{\partial t}\bigg|_{t=0}\\ &=p\int_{M}\Big(|d\omega|^{p-2}\langle{d}\omega,d\phi'(0)\rangle
 +|\delta_{f}\omega|^{p-2}\langle\delta_{f}\omega,
 \delta_{f}\phi'(0)\rangle
 -\lambda|\omega|^{p-2}\langle\omega,\phi'(0)\rangle\Big)d\mu\\
 & \ \ \ +\tau\int_M \Big((p-2)|\omega|^{p-4}\langle\omega,\phi'(0)\rangle\langle\omega,
\varphi\rangle+|\omega|^{p-2}\langle\phi'(0),\varphi\rangle\Big)d\mu.
 \end{aligned}
\end{equation*}

Setting $\phi'(0)=\varphi$, we get
\begin{equation*}
 \begin{aligned}
0=\tau\int_{M}\Big((p-2)|\omega|^{p-4}\langle\omega,\varphi\rangle^2
+|\omega|^{p-2}|\varphi|^2\Big)d\mu,
 \end{aligned}
\end{equation*}
so that $\tau=0$. Therefore, for any variation $\phi(t)$ of $\omega$,
we obtain
\begin{equation*}
\begin{aligned}
\int_{M}\Big(|d\omega|^{p-2}\langle{d}\omega,d\phi'(0)\rangle
 +|\delta_{f}\omega|^{p-2}\langle\delta_{f}\omega,
 \delta_{f}\phi'(0)\rangle\Big)d\mu
 =\lambda\int_{M}|\omega|^{p-2}\langle\omega,\phi'(0)\rangle d\mu,
\end{aligned}
\end{equation*}
that is
\begin{equation*}
 \begin{aligned}
  &\Delta_{p,f}\omega=\lambda|\omega|^{p-2}\omega.
 \end{aligned}
\end{equation*}
We finished the proof of the proposition.
\end{proof}

\section{Eigenvalue estimates on closed manifolds}

In this section, we present the proof of Theorem \ref{thm:1.3}. Let $\omega$ be an eigenform associated with $\lambda=\lambda_{1,k,f}$. By the weighted Bochner formula \eqref{eqn:2.5}, we derive
\begin{equation}\label{eqn:4.1}
 \begin{aligned}
  & \ \ \ \ \ \ \int_{M}\langle\Delta_{p,f}\omega,\Delta_{f}\omega\rangle{d\mu}\\
  &=\lambda\int_{M}|\omega|^{p-2}\langle\omega,
  \Delta_{f}\omega\rangle{d\mu}\\		
  &=\lambda\int_{M}|\omega|^{p-2}
  \Big(\frac{1}{2}\Delta_{f}|\omega|^{2}+|\nabla\omega|^2
  +\langle{\rm Ric}^{[k]}_{f}(\omega),\omega\rangle\Big){d\mu}\\				&=-\frac{\lambda}{2}\int_{M}{\rm div}_{f}(|\omega|^{p-2}\nabla|\omega|^{2})d\mu
  +\frac{\lambda}{2}\int_{M}\langle\nabla|\omega|^{p-2},
  \nabla|\omega|^{2}\rangle{d\mu}\\
  & \ \ \ \ +\lambda\int_{M}|\omega|^{p-2}\Big(|\nabla\omega|^2
  +\langle{\rm Ric}^{[k]}_{f}(\omega),\omega\rangle\Big)d\mu\\		
  &=\lambda\int_{M}|\omega|^{p-2}\Big((p-2)
  \big|\nabla|\omega|\big|^{2}+|\nabla\omega|^2
  +\langle{\rm Ric}^{[k]}_{f}(\omega),\omega\rangle\Big)d\mu,
\end{aligned}
\end{equation}
where ${\rm div}_{f}(\cdot):=e^f{\rm div}(e^{-f}\cdot)$.

On the other hand, using Proposition \ref{prop:2.1}, we have
\begin{equation}\label{eqn:4.2}
 \begin{aligned}
  & \ \ \ \ \ \ \int_{M}\langle\Delta_{p,f}\omega,\Delta_{f}\omega\rangle{d\mu}\\			&=\lambda\int_{M}\langle|\omega|^{p-2}\omega,
  d\delta_{f}\omega\rangle{d\mu}
  +\lambda\int_{M}\langle|\omega|^{p-2}\omega,
  \delta_{f}d\omega\rangle{d\mu}\\	&=\lambda\int_{M}\langle\delta_{f}(|\omega|^{p-2}\omega),
  \delta_{f}\omega\rangle{d\mu}
  +\lambda\int_{M}\langle{d}(|\omega|^{p-2}\omega),
  d\omega\rangle{d\mu}\\			
  &=\lambda\int_{M}|\omega|^{p-2}|\delta_{f}
  \omega|^{2}d\mu+\lambda\int_{M}|\omega|^{p-2}|d\omega|^{2}d\mu\\
  & \ \ \ \ -\lambda\int_{M}\langle\iota_{\nabla|\omega|^{p-2}}\omega,
  \delta_{f}\omega\rangle{d\mu}
  +\lambda\int_{M}\langle{d}(|\omega|^{p-2})\wedge\omega,
  d\omega\rangle{d}\mu.
 \end{aligned}
\end{equation}
Combining \eqref{eqn:4.1} and \eqref{eqn:4.2}, we obtain
\begin{equation}\label{eqn:4.3}
 \begin{aligned}			
 & \ \ \ \
 \int_{M}|\omega|^{p-2}\Big((p-2)\big|\nabla|\omega|\big |^{2}+|\nabla\omega|^2+\langle{\rm Ric}^{[k]}_{f}(\omega),
 \omega\rangle\Big)d\mu\\
&=\int_{M}|\omega|^{p-2}\big(|d\omega|^{2}
+|\delta_{f}\omega|^{2}\big)d\mu
+\int_{M}\langle{d}(|\omega|^{p-2})\wedge\omega,d\omega\rangle{d}\mu\\
& \ \ \ \ -\int_{M}\langle\iota_{\nabla|\omega|^{p-2}}\omega,
\delta_{f}\omega\rangle{d\mu}.
 \end{aligned}	
\end{equation}

Next, we estimate the last two terms in \eqref{eqn:4.3} as follows:
\begin{equation}\label{eqn:4.4}
 \begin{aligned}	
    \int_{M}\langle{d}(|\omega|^{p-2})\wedge\omega,d\omega\rangle{d\mu}
  &=\int_{M}\langle\omega,
  \iota_{\nabla|\omega|^{p-2}}d\omega\rangle{d\mu}\\
  &\leq\int_{M}\big|\nabla|\omega|^{p-2}\big|\cdot
  |d\omega|\cdot|\omega|d\mu\\
  &=(p-2)\int_{M}|\omega|^{p-2}\cdot\big|\nabla|\omega|\big|
  \cdot|d\omega|d\mu\\
  &\leq\frac{(p-2)}{2}\int_{M}|\omega|^{p-2}\Big(\big |\nabla|\omega|\big|^{2}+|d\omega|^{2}\Big)d\mu,
 \end{aligned}
\end{equation}
and
\begin{equation}\label{eqn:4.5}
 \begin{aligned}
  -\int_{M}\langle\iota_{\nabla|\omega|^{p-2}}\omega,
  \delta_{f}\omega\rangle d\mu
  &\leq\int_{M}|\omega|\cdot\big|\nabla|\omega|^{p-2}\big|
  \cdot|\delta_{f}\omega|d\mu\\
  &=(p-2)\int_{M}|\omega|^{p-2}\cdot\big|\nabla|\omega|\big|
  \cdot|\delta_{f}\omega|d\mu\\
  &\leq\frac{(p-2)}{2}\int_{M}|\omega|^{p-2}
  \Big(\big|\nabla|\omega|\big|^{2}
  +|\delta_{f}\omega|^{2}\Big)d\mu.
 \end{aligned}
\end{equation}

Applying the above estimates to \eqref{eqn:4.3} and using
Proposition \ref{prop:2.4}, we get
\begin{equation}\label{ineq:4.6}
 \begin{aligned}
   & \ \ \ \
  \Big(\frac{p-2}{2}+1\Big)
  \int_{M}|\omega|^{p-2}\Big(|d\omega|^2+|\delta_{f}\omega|^2\Big)d\mu\\
  &\geq\int_M|\omega|^{p-2}|\nabla\omega|^2{d\mu}
  +\int_M|\omega|^{p-2}\langle{\rm Ric}^{[k]}_{f}(\omega),\omega\rangle{d\mu}\\		&\geq\frac{1}{k+1}\int_M|\omega|^{p-2}|d\omega|^2{d\mu}
  +\frac{1}{N}\int_M|\omega|^{p-2}|\delta_{f}\omega|^2{d\mu}\\
  & \ \ \ \ -\frac{1}{N-(n-k+1)}\int_M|\omega|^{p-2}
  |\iota_{\nabla f}\omega|^2{d\mu}
  +\int_M|\omega|^{p-2}\langle{\rm Ric}_{f}^{[k]}(\omega),\omega\rangle{d\mu}\\
  &=\frac{1}{k+1}\int_M|\omega|^{p-2}|d\omega|^2{d\mu}
  +\frac{1}{N}\int_M|\omega|^{p-2}|\delta_{f}\omega|^2{d\mu}\\
  & \ \ \ \ +\int_M|\omega|^{p-2}\langle{\rm Ric}_{N,f}^{[k]}(\omega),\omega\rangle{d\mu}.
  \end{aligned}	
\end{equation}
Notice that
\begin{equation}\label{eqn:4.6}		
 \begin{aligned}
  \int_M | \omega|^{p-2} |d\omega|^2{d\mu}
  \leq \Big(\int_M|\omega|^p{d\mu}\Big)^{1-\frac{2}{p}}
  \Big(\int_M|d\omega|^p{d\mu}\Big)^\frac{2}{p}
 \end{aligned}
\end{equation}
and
\begin{equation}\label{eqn:4.7}
 \begin{aligned}
  \int_M |\omega|^{p-2} |\delta_f{\omega}|^2{d\mu}\leq \Big(\int_M|\omega|^p{d\mu}\Big)^{1-\frac{2}{p}}
  \Big(\int_M|\delta_f{\omega}|^p{d\mu}\Big)^{\frac{2}{p}}.
 \end{aligned}
\end{equation}
Let
$$\widetilde{C}={\rm max}\Big\{\frac{k}{k+1},\frac{N-1}{N}\Big\}.$$
By \eqref{eqn:4.6} and \eqref{eqn:4.7}, we have
\begin{equation}	
 \begin{aligned}
  & \ \ \ \
  	\int_M|\omega|^{p-2}\langle{\rm Ric}_{N,f}^{[k]}(\omega),\omega\rangle{d\mu}\\
  &\leq\Big(\widetilde{C}+\frac{p-2}{2}\Big)\int_M|\omega|^{p-2}
  \big(|d\omega|^{2}+|\delta_{f}\omega|^{2}\big)d\mu\\	 &\leq\Big(\widetilde{C}+\frac{p-2}{2}\Big)
  \Big(\int_M|\omega|^{p}d\mu\Big)^{1-\frac{2}{p}}
  \left[\Big(\int_{M}|d\omega|^{p}d\mu\Big)^{\frac{2}{p}}
  +\Big(\int_{M}|\delta_{f}\omega|^{p}d\mu\Big)^{\frac{2}{p}}\right]\\			&\leq2^{1-\frac{2}{p}}\Big(\widetilde{C}+\frac{p-2}{2}\Big)
  \Big(\int_M|\omega|^{p}d\mu\Big)^{1-\frac{2}{p}}
  \left(\int_{M}|d\omega|^{p}d\mu
  +\int_{M}|\delta_{f}\omega|^{p}d\mu\right)^{\frac{2}{p}}.	
 \end{aligned}
\end{equation}
Using the fact that $\int_{M}|d\omega|^{p}d\mu+\int_{M}|\delta_{f}\omega|^{p}d\mu
=\lambda\int_{M}|\omega|^{p}d\mu$, we get
\begin{equation}\label{4.10}
\begin{aligned}		
  2^{1-\frac{2}{p}}(\widetilde{C}+\frac{p-2}{2})
  \lambda^{\frac{2}{p}}\int_M|\omega|^{p}d\mu
  &\geq\int_M|\omega|^{p-2}\langle{\rm Ric}_{N,f}^{[k]}(\omega),\omega\rangle{d\mu}.
\end{aligned}
\end{equation}
This completes the proof of Theorem \ref{thm:1.3}. \qed

\begin{proof}[Proof of Corollary \ref{cor:1.2}]
Taking $p=2$ in \eqref{ineq:4.6}, we immediately obtain the estimate
\eqref{1.12}.
\end{proof}

\section{Eigenvalue estimates on closed submanifolds}

In this section, we first recall some fundamental facts on
the geometry of submanifolds.

Let $F:M\to\overline{M}$ be an isometric immersion from an $n$-dimensional closed Riemannian manifold $(M,g)$ into an $(n+m)$-dimensional Riemannian manifold $(\overline{M},\overline{g})$.
We choose an orthonormal frame $\{e_1,\cdots,e_{n+m}\}$ on $\overline{M}$ and the dual coframe $\{\theta^1,\cdots,\theta^{n+m}\}$ such that, restricted to $M$, $e_1,\cdots,e_n$ are tangent to $M$, and $e_{n+1},\cdots,e_{n+m}$
are normal to $M$.
We will agree on the following index convention:
$$ 1\leq i,j,k,\cdots\leq n;\quad
 n+1\leq\alpha,\beta,\gamma,\cdots\leq n+m.$$

Let $h_{ij}^{\alpha}$ be the components of the second fundamental form $B$ of the immersion $F:M\to\overline{M}$. Then
the second fundamental form $B$
and the mean curvature vector $H$
are given by
\begin{equation}
  B=h_{ij}^{\alpha}\theta^i\otimes\theta^j\otimes{e_{\alpha}},\quad
  H=H^{\alpha}e_{\alpha}=\frac{1}{n}\sum_{i}h_{ii}^{\alpha}e_{\alpha}.
\end{equation}
We write $\mathring{B}=B-H\otimes g$ which is the traceless part of $B$.
Let $A$ be the shape operator defined by
\begin{equation}
  \langle{B}(X,Y),\nu\rangle
 =\langle A^{\nu}(X), Y\rangle, \qquad
 \forall \ X, Y \in TM, \ \nu\in T^{\bot}M.
\end{equation}
The shape operator admits a canonical extension $S^{[k]}$ acting on $k$-forms of $M$. Explicitly, for any $\omega\in\Omega^{k}(M)$,
\begin{equation}\label{def5.4}
S^{[k]}(\omega)
:=\theta^i\wedge{\iota_{A^{e_\alpha}(e_i)}}
\omega\otimes{e_\alpha}.
\end{equation}
For simplification, we introduce the following notations:
\begin{equation*}
\begin{aligned}
   & A^{\alpha}:=A^{e_{\alpha}},\qquad
\mathring{A}^{\alpha}:=A^{\alpha}-H^{\alpha}g,\\
   &\mathring{S}^{[k]}(\omega):=\theta^i\wedge{\iota_{\mathring {A}^{\alpha}(e_i)}}\omega\otimes{e_\alpha}.
\end{aligned}
\end{equation*}

We denote by $\overline{R}$, $\overline{{\rm Ric}}^{[k]}$ and $\overline{{\rm Ric}}_{N,f}^{[k]}$ the Riemann curvature tensor,
the Weitzenb\"{o}ck curvature operator and the $N$-dimensional Bakry-\'{E}mery Ricci operator
on $\overline{M}$, respectively, while by $R$, ${\rm Ric}^{[k]}$ and ${\rm Ric}_{N,f}^{[k]}$
the Riemann curvature tensor,
the Weitzenb\"{o}ck curvature operator and the $N$-dimensional Bakry-\'{E}mery Ricci operator
on $M$, respectively.
Then the Gauss equations are
\begin{equation}\label{Gauss}
R_{ijkl}=\overline{R}_{ijkl}+\sum_{\alpha}(h_{ik}^{\alpha}
h_{jl}^{\alpha}-h_{il}^{\alpha}
h_{jk}^{\alpha}).
\end{equation}
For any $\omega\in\Omega^{k}(M)$, the Weitzenb\"{o}ck curvature operator ${\rm Ric}^{[k]}$ and the pull back Weitzenb\"{o}ck curvature operator $F^*\overline{{\rm Ric}}^{[k]}$
are given by
\begin{equation*}
\begin{aligned}
  {\rm Ric}^{[k]}(\omega)
&=\sum_{i,j}\theta^j\wedge\iota_{e_i}R(e_i, e_j)\omega
=\sum_{i,j,l,r}R_{ijrl}\theta^j\wedge
\iota_{e_i}(\theta^r\wedge\iota_{e_l}\omega),\\
  F^*\overline{{\rm Ric}}^{[k]}(\omega)
&=\sum_{i,j}\theta^j\wedge\iota_{e_i}\overline{R}(e_i, e_j)\omega
=\sum_{i,j,l,r}\overline{R}_{ijrl}\theta^j\wedge
\iota_{e_i}(\theta^r\wedge\iota_{e_l}\omega).
\end{aligned}
\end{equation*}

\vskip 2mm
\noindent{\bf Proof of Theorem \ref{thm:1.5}} \
For a closed submanifold $M$ of $\overline{M}$, the proof of Theorem \ref{thm:1.3} yields the following estimate:
\begin{equation}\label{eqn:5.1}		
  2^{1-\frac{2}{p}}\big(\widetilde{C}+\frac{p-2}{2}\big)
  \lambda^{\frac{2}{p}}\int_M|\omega|^{p}d\mu
\geq\int_M|\omega|^{p-2}\langle{\rm Ric}_{N,f}^{[k]}(\omega),\omega\rangle{d\mu},
\end{equation}
where $\omega$ is an eigenform associated with
the first nonzero eigenvalue $\lambda=\lambda_{1,k,f}$ of the weighted $p$-Laplacian $\Delta_{p,f}$ acting on $k$-forms of $M$.

Next we consider the curvature term $\langle{\rm Ric}_{N,f}^{[k]}(\omega),\omega\rangle$.
We denote by $\overline{\nabla}$ and $\overline{\nabla}^2$ the gradient and the Hessian on $\overline{M}$, respectively, while by $\nabla$ and $\nabla^2$ the gradient and the Hessian on $M$, respectively.
By the definition \eqref{def:2.8}, we have
\begin{equation}\label{5.8}	
 \langle{\rm Ric}_{N,f}^{[k]}(\omega),\omega\rangle
=\langle{\rm Ric}^{[k]}(\omega),\omega\rangle
   +\langle\nabla^{2}f(\omega),\omega\rangle
   -\frac{1}{N-(n-k+1)}|\iota_{\nabla f}\omega|^2.
\end{equation}
It follows from \eqref{Gauss} that
\begin{equation}\label{5.6}
\langle{\rm Ric}^{[k]}(\omega),\omega\rangle
=\langle F^*\overline{{\rm Ric}}^{[k]}(\omega),\omega\rangle
+\langle S^{[k]}(\omega),nH\omega\rangle
-\big|S^{[k]}(\omega)\big|^2.
\end{equation}
Notice that
\begin{equation*}
\begin{aligned}
\nabla f&=(\overline{\nabla}f)^{\top},\\
\nabla^2f(e_i,e_j)&=\overline{\nabla}^2f(e_i,e_j)
+\langle\overline{\nabla}f,B(e_i,e_j)\rangle,
\end{aligned}
\end{equation*}
where $\top$ denotes the tangent projection to $M$.
Hence, by \eqref{hess}, we obtain
\begin{equation}\label{5.9}
 \begin{aligned}
& \ \ \ \ \langle\nabla^2{f}(\omega),\omega\rangle\\
&=\sum_{i,j}\nabla^2{f}(e_i,e_j)\langle\theta^j
  \wedge\iota_{e_i}\omega,\omega\rangle\\
&=\sum_{i,j}\overline{\nabla}^2{f}(e_i,e_j)\langle\theta^j
  \wedge\iota_{e_i}\omega,\omega\rangle
  +\sum_{i,j}\langle\overline{\nabla}f,B(e_i,e_j)\rangle
  \langle\theta^j\wedge\iota_{e_i}\omega,\omega\rangle\\
&=\langle F^*\overline{\nabla}^2{f}(\omega),\omega\rangle
  +\sum_{i,j}\sum_{\alpha}
  \langle\overline{\nabla}f,e_{\alpha}\rangle
  \langle A^{\alpha}(e_j),e_i\rangle
  \langle\theta^j\wedge\iota_{e_i}\omega,\omega\rangle\\
&=\langle F^*\overline{\nabla}^2{f}(\omega),\omega\rangle
  +\sum_{j}\sum_{\alpha}
  \langle\overline{\nabla}f,e_{\alpha}\rangle
  \langle\theta^j\wedge\iota_{A^{\alpha}(e_j)}\omega,\omega\rangle\\
 &=\langle F^*\overline{\nabla}^2{f}(\omega),\omega\rangle
  +\langle S^{[k]}(\omega),
  (\overline{\nabla}f)^{\bot}\omega\rangle,
\end{aligned}
\end{equation}
where $\bot$ denotes the projection onto the normal bundle $T^{\bot}M$ of $M$.
Applying \eqref{5.6} and \eqref{5.9} to \eqref{5.8}, we get
\begin{equation*}\label{5.11}
 \begin{aligned}
   & \ \ \ \ \langle{\rm Ric}_{N,f}^{[k]}(\omega),\omega\rangle\\
   &=\langle F^*\overline{{\rm Ric}}_{N,f}^{[k]}(\omega),\omega\rangle
+\langle S^{[k]}(\omega),\big(nH+(\overline{\nabla}f)^{\bot}\big)\omega\rangle
-\big|S^{[k]}(\omega)\big|^2\\
   &=\langle F^*\overline{{\rm Ric}}_{N,f}^{[k]}(\omega),\omega\rangle
+\langle\mathring{S}^{[k]}(\omega)+kH\omega,
\big(nH+(\overline{\nabla}f)^{\bot}\big)\omega\rangle
-\big|\mathring{S}^{[k]}(\omega)+kH\omega\big|^2\\
   &=\langle F^*\overline{{\rm Ric}}_{N,f}^{[k]}(\omega),\omega\rangle
-\big|\mathring{S}^{[k]}(\omega)\big|^2
+(n-2k)\langle\mathring{S}^{[k]}(\omega),H\omega\rangle
+\langle\mathring{S}^{[k]}(\omega),
(\overline{\nabla}f)^{\bot}\omega\rangle\\
& \ \ \ \ +k(n-k)|H|^2|\omega|^2
+k\langle H,\overline{\nabla}f\rangle
|\omega|^2.
 \end{aligned}
\end{equation*}

From the proof of Theorem 1.1 in \cite{Cui-Sun} (see also Formula (18) of \cite{Raulot-Savo} and its proof), we have
\begin{equation}
\big|\mathring{S}^{[k]}(\omega)\big|^2
\leq\frac{k(n-k)}{n}|\mathring{B}|^2|\omega|^2.
\end{equation}
Therefore,
\begin{equation*}
\begin{aligned}
& \ \ \ \ \langle{\rm Ric}_{N,f}^{[k]}(\omega),\omega\rangle\\
&\geq k(n-k)\bigg(c+|H|^2-\frac{1}{n}\big|\mathring{B}\big|^2
-\frac{(n-2k)|H|+\big|(\overline{\nabla}f)^{\bot}\big|}
{\sqrt{nk(n-k)}}\big|\mathring{B}\big|
-\frac{\big|(\overline{\nabla}f)^{\bot}\big|}{n-k}|H|
\bigg)|\omega|^2.
 \end{aligned}
\end{equation*}
Let
\begin{equation*}
\begin{aligned}
\xi&=\underset{M}{\rm max}\big|(\overline
{\nabla}f)^{\bot}\big|,\\
\widetilde{\gamma}_{k}&=\min_{M}\Big\{|H|^2-\frac{1}{n}\big|\mathring{B}\big|^2
-\frac{(n-2k)|H|+\xi}{\sqrt{nk(n-k)}}\big|\mathring{B}\big|
-\frac{\xi}{n-k}|H|\Big\}.
\end{aligned}
\end{equation*}
Then
\begin{equation}\label{5.10}
\langle{\rm Ric}_{N,f}^{[k]}(\omega),\omega\rangle
\geq k(n-k)(c+\widetilde{\gamma}_{k})|\omega|^2.
\end{equation}
Substituting the curvature estimate \eqref{5.10} into \eqref{eqn:5.1}, we immediately obtain the estimate \eqref{1.14} for the first nonzero eigenvalue $\lambda_{1,k,f}$ of the weighted $p$-Laplacian $\Delta_{p,f}$
acting on $k$-forms of $M$. \qed

\end{document}